\documentclass[12pt]{article}

\usepackage{amsmath}
\usepackage{amsfonts}
\usepackage{amsthm}
\usepackage{amssymb}
\usepackage{bm}
\usepackage[applemac]{inputenc}
\usepackage{enumerate}
\bibliographystyle{siam}


\newtheoremstyle{mystyle}{}{}{\slshape}{2pt}{\scshape}{.}{ }{} 

\newtheorem{thm}{Theorem}[section]
\newtheorem{cor}[thm]{Corollary}

\newtheorem{prop}[thm]{Proposition}
\newtheorem{lemme}[thm]{Lemma}
\newtheorem{fait}[thm]{Fact}

\theoremstyle{definition}
\newtheorem{defi}[thm]{Definition}
\theoremstyle{mystyle}

\theoremstyle{remark}

%
%

\newcommand{\monster}{\mathcal U}
\newcommand{\invtypes}{\Inv_\phi(M)}
\newcommand{\fstypes}{S_{fs}(M)}

\newcommand{\finiteset}{\mathfrak P_{<\omega}(\kappa)}
\newcommand{\filter}{\mathfrak F_\kappa}

\DeclareMathOperator{\tp}{tp}

\DeclareMathOperator{\Inv}{Inv}
\DeclareMathOperator{\opp}{opp}

\def\indsym#1#2{%
 \setbox0=\hbox{$\m@th#1x$}%
 \kern\wd0%
 \hbox to 0pt{\hss$\m@th#1\mid$\hbox to 0pt{$\m@th#1^{#2}$\hss}\hss}%
 \lower.9\ht0\hbox to 0pt{\hss$\m@th#1\smile$\hss}%
 \kern\wd0}

\def\nindsym#1#2{%
 \setbox0=\hbox{$\m@th#1x$}%
 \kern\wd0%
 \hbox to 0pt{\hss$\m@th#1\not$\kern1.4\wd0\hss}
 \hbox to 0pt{\hss$\m@th#1\mid$\hbox to 0pt{$\m@th#1^{#2}$\hss}\hss}%
 \lower.9\ht0\hbox to 0pt{\hss$\m@th#1\smile$\hss}%
 \kern\wd0}

\title{Rosenthal compacta and NIP formulas}
\author{Pierre Simon\thanks{Partially supported by ValCoMo (ANR-13-BS01-0006) and by MSRI, Berkeley.}}

\date{}
\begin{document}




\maketitle

%
%
%
%
%
%

\begin{abstract}
We apply the work of Bourgain, Fremlin and Talagrand on compact subsets of the first Baire class to show new results about $\phi$-types for $\phi$ NIP. In particular, we show that if $M$ is a countable model, then an $M$-invariant $\phi$-type is Borel definable. Also the space of $M$-invariant $\phi$-types is a Rosenthal compactum, which implies a number of topological tameness properties.
\end{abstract}

Shelah introduced the independence property (IP) for first order formulas in 1971 \cite{Sh10}. Some ten years later, Poizat \cite{PoiInstable} proved that a countable theory $T$ does not have the independence property (is NIP) if and only if for any model $M$ of $T$ and type $p\in S(M)$, $p$ has at most $2^{|M|}$ coheirs (the bound a priori being $2^{2^{|M|}}$). Another way to state this result is to say that for any model $M$, the closure in $S(M)$ of a subset of size at most $\kappa$ has cardinality at most $2^{\kappa}$. Thus NIP is equivalent to a topological tameness condition on the space of types.

At about the same time, Rosenthal \cite{Rosen0} studied Banach spaces not embedding $l_1$. He showed that a separable Banach space $B$ does not contain a closed subspace isomorphic to $l_1$ if and only if the unit ball of $B$ is relatively sequentially compact in the bidual $B^{**}$, if and only if $B^{**}$ has the same cardinality as $B$. Note that an element of $B^{**}$ is by definition a function on $B^*$, the topology on $B^{**}$ is that of pointwise convergence, and $B$, identified with a subset of $B^{**}$, is dense. Shortly after this work, Rosenthal \cite{Rosen} and then Bourgain, Fremlin and Talagrand \cite{BFT} extended the ideas of this theorem and studied systematically the pointwise closure of subsets $A$ of continuous functions on a Polish space. It turns out that there is a sharp dichotomy: either the closure $\bar A$ contains non-measurable functions or all functions in the closure can be written as a pointwise limit of a \emph{sequence} of elements of $A$. In the latter case, the closure has size at most $2^{|A|}$. It turns out that this dichotomy corresponds to the NIP/IP dichotomy in an explicit way: see Fact \ref{fait_bft} (v).

The theory of compact subsets of Baire 1 functions, also known as \emph{Rosenthal compacta}, has received a lot of attention since both in general topology and set theory. See for example \cite{gode} and \cite{TodorRosen}.

The goal of this paper is to see what the Bourgain--Fremlin--Talagrand theory can tell us about NIP formulas. On the one hand, it leads us to consider new tameness properties of the space of types, whose proofs turn out to be easy with standard model-theoretic tools (Section \ref{sec_type}). On the other hand, it can be applied to prove results about invariant types for which we know no model-theoretic proof.

In particular we show the following (which is a concatenation of Propositions \ref{prop_frechetcount} and \ref{prop_isrosenthal} along with Theorem \ref{th_baireone}).

\begin{thm}\label{th_mainintro} ($T$ is countable.)
Let $\phi(x;y)$ be an NIP formula and $M$ a countable model. Let $p\in S_\phi(\monster)$ be a global $M$-invariant $\phi$-type. Then $p$ is Borel-definable: more precisely the set $\{q\in S_y(M): p\vdash \phi(x;b)$ for $b\in q(\monster)\}$ is both an $F_\sigma$ and a $G_\delta$ subset of $S_y(M)$.

The set $\invtypes$ of global $M$-invariant $\phi$-types is a Rosenthal compactum. In particular: If $Z\subseteq S_\phi(\monster)$ is a family of $M$-invariant $\phi$-types and $p$ is in $\bar Z$---the topological closure of $Z$---then $p$ is the limit of a sequence of elements of $Z$.
\end{thm}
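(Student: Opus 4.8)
The plan is to make the Bourgain--Fremlin--Talagrand machinery applicable by realizing $\invtypes$ concretely as a space of $\{0,1\}$-valued functions on a Polish space. Since $T$ and $M$ are countable, $X := S_y(M)$ is a compact metrizable space. To each $p\in\invtypes$ I would attach the function $\tilde p\colon X\to\{0,1\}$ with $\tilde p(q)=1$ iff $p\vdash\phi(x;b)$ for $b\in q(\monster)$; $M$-invariance of $p$ is exactly what makes this well defined, since the truth of $\phi(x;b)\in p$ then depends only on $\tp(b/M)=q$. First I would verify that $p\mapsto\tilde p$ is injective (an invariant $\phi$-type is determined by which $\phi(x;b)$ it contains) and a homeomorphism onto its image in $\{0,1\}^X$ with the pointwise topology: the basic clopen sets $[\phi(x;b)]$ and $[\neg\phi(x;b)]$ defining the topology on $\invtypes$ correspond precisely to the pointwise conditions $\{\tilde p(q)=1\}$ and $\{\tilde p(q)=0\}$ for $q=\tp(b/M)$. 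As $\invtypes$ is closed in the compact space $S_\phi(\monster)$, its image is compact.

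The heart of the matter is to show that each $\tilde p$ is of Baire class $1$ on $X$, i.e.\ a pointwise limit of a sequence of continuous functions. The continuous $\{0,1\}$-valued functions on $X$ are exactly the clopen indicators $q\mapsto[\psi(y)\in q]$ for $\psi$ an $L(M)$-formula, so concretely the task is to produce $L(M)$-formulas $\psi_n(y)$ with $[\psi_n\in q]\to\tilde p(q)$ for every $q$; that is, formulas over $M$ approximating pointwise the co-definition $\{b:\phi(x;b)\in p\}$ of the invariant type $p$. This is precisely the content I would import from Theorem \ref{th_baireone}, and I expect it to be the main obstacle, because it is here, and essentially only here, that the NIP hypothesis is indispensable. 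The mechanism is the Bourgain--Fremlin--Talagrand dichotomy (Fact \ref{fait_bft}(v)): for an NIP formula one sits in the tame branch, where pointwise closures of the relevant families of continuous functions consist of Baire-$1$ functions, rather than the IP branch where non-measurable functions appear; converting the combinatorial bound coming from NIP into this topological regularity is the delicate point. Granting the Baire-$1$ property, the Borel-definability clause is immediate: a $\{0,1\}$-valued function on a metrizable space is of Baire class $1$ exactly when its $1$-level set is simultaneously $F_\sigma$ and $G_\delta$, and $\tilde p^{-1}(1)$ is by construction the set $\{q\in S_y(M): p\vdash\phi(x;b)\text{ for }b\in q(\monster)\}$.

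For the second clause I would simply assemble the pieces. Having embedded $\invtypes$ as a pointwise-compact subset of the Baire-$1$ functions $\mathcal{B}_1(X)$ on the Polish space $X=S_y(M)$, it is by definition a Rosenthal compactum, which is Proposition \ref{prop_isrosenthal}. The sequential-closure property is then not a separate argument but the fundamental theorem of Bourgain--Fremlin--Talagrand, that every Rosenthal compactum is Fr\'echet--Urysohn (angelic): any point in the closure of a subset is already the limit of a \emph{sequence} from that subset. Concretely, given $Z\subseteq\invtypes$ and $p\in\bar Z$, compactness of $\invtypes$ forces $\bar Z\subseteq\invtypes$, and the Fr\'echet--Urysohn property (Proposition \ref{prop_frechetcount}) produces a sequence in $Z$ converging to $p$. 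Thus the whole statement reduces, modulo the encoding bookkeeping of the first paragraph, to the single hard input that invariant $\phi$-types are Baire class $1$ when $\phi$ is NIP.
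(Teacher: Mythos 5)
Your packaging is fine and matches the paper's (your $\tilde p$ is the paper's $d_p$, and the equivalence between Baire class $1$ and the $1$-level set being simultaneously $F_\sigma$ and $G_\delta$ is Theorem \ref{th_baireone} with $\kappa=\aleph_0$). But the step you yourself identify as the heart---that $d_p$ is Baire class $1$---is left as an import, and the imports you name cannot supply it. Theorem \ref{th_baireone} is purely topological and contains no NIP content; it converts fragmentation ($B^{\S}_r$) into Baire class $1$, it does not produce your approximating formulas $\psi_n(y)$. And Fact \ref{fait_bft} cannot be applied on $X=S_y(M)$ as you propose: to invoke it you must exhibit a countable family $A\subseteq C(X)$ of continuous functions with $d_p\in\overline A$, and the only natural candidate---the functions $f_a(q)=$ truth value of $\phi(a;b)$ for $b\models q$, with $a$ ranging over $M$---has pointwise closure consisting exactly of the $d_q$ for $q$ \emph{finitely satisfiable} in $M$, not for all $M$-invariant $q$. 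Concretely, in DLO with $M=(\mathbb Q,<)$, $\phi(x;y)\colon x>y$, and $p$ the definable (hence invariant, but not finitely satisfiable) type at $+\infty$, one has $d_p\equiv 1$, while every $f_a$ vanishes at the type $q_\infty\in S_y(M)$ at $+\infty$; so $d_p\notin\overline{\{f_a:a\in M\}}$. For a merely invariant $p$ there is no off-the-shelf countable family of continuous functions on $S_y(M)$ whose closure contains $d_p$, which is precisely why the countable case of this theorem is not a direct quotation of Bourgain--Fremlin--Talagrand.

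What fills this gap in the paper is a dualization that has no counterpart in your outline. Since globally finitely satisfiable types are themselves $M$-invariant, the functions $\hat s$, $s\in S_x(M)$, are well defined and continuous on $L=\fstypes$, and Fact \ref{fait_invkappa} places $f_p=\pi^*d_p$ in their closure; but $L$ is not Polish, so BFT cannot be quoted there either. The paper therefore reproves the fragmentation property by hand on $L$ (Proposition \ref{prop_bftreprovedkappa}, the dyadic-tree argument, which is where NIP actually enters) and then descends along the surjection $\pi:L\to S_y(M)$ via Lemma \ref{lem_top}---the step the author flags as possibly new. Your handling of the second clause is essentially sound: once $d_p$ is known to be Baire $1$, Proposition \ref{prop_isrosenthal} is exactly your embedding argument, and extracting sequences by citing BFT angelicity is legitimate (the paper notes one could do this, though it instead reproves sequential extraction model-theoretically, via Morley sequences in Proposition \ref{prop_frechetkappa} plus a cardinality-reduction lemma, to remain self-contained and to cover uncountable $\kappa$). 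One small correction there: $\overline Z\subseteq\invtypes$ because $\invtypes$ is \emph{closed} in $S_\phi(\monster)$ (invariance is a closed condition), not because it is compact.
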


The first point (Borel definability) was proved by Hrushovski and Pillay \cite{NIP2} assuming that the full theory is NIP. In fact their proof works if just $\phi(x;y)$ is assumed to be NIP, as long as the partial type $p$ extends to a complete $M$-invariant type. In a general theory, this need not be the case (see Section 5, Example 1 of \cite{CherKapl} for a example). The second point in the theorem is new even for NIP theories.

We will actually prove more general results which do not assume that $M$ or $T$ is countable. The proofs use two ingredients: first a theorem from \cite{InvTypes} which gives a new description of invariant $\phi$-types for an NIP formula $\phi$. Using it, we can show that the set $\invtypes$ is a Rosenthal compactum (when $M$ is countable). We could then simply apply the theory of Bourgain--Fremlin--Talagrand to obtain results such as Theorem \ref{th_mainintro}. However, to keep this paper self-contained and to remove the assumption that $M$ is countable, we will reprove everything from scratch. We want to make it clear that all the proofs in this paper (to the exception of Lemma \ref{lem_top} which is possibly new) are very closely adapted from previous works: mostly \cite{BFT} and \cite{TodorBookTopo}. The only new feature is that we do not work over a second countable space, but this poses no difficulty once the right dictionary is found (we have to replace sequences with more complicated families).

Let us say a few words about applications. The fact that invariant types in NIP theories are Borel-definable is fundamental for the theory of Keisler measures as developed in \cite{NIP2}. Using the results presented here, we can extend this theory to the case of an NIP formula $\phi(x;y)$ in an arbitrary theory. Furthermore, the fact that the closure of invariant types is witnessed by convergent sequences is used in \cite{tamedyn} to prove that, for a definably amenable group $G$, the map $p\mapsto \mu_p$ which sends an $f$-generic type to the associated $G$-invariant measure is continuous. We expect more applications to be found in the future.

Finally, we point out that Rosenthal's dichotomy was imported in dynamics through the work of K\"ohler \cite{kohler} and Glasner \cite{glasner}. From there, the relationship with NIP was noticed independently by Chernikov and myself in the work \cite{tamedyn} mentioned above and by Ibarluc\'ia \cite{Ibar} in the context of $\aleph_0$-categorical structures and automorphism groups.

\smallskip
This paper is organized as follows: In the first section, we present the relevant part of the work of Bourgain, Fremlin and Talagrand. In the second section, we state our main results and give self-contained definitions and proofs (apart from Fact \ref{fait_invkappa} which comes from one of our previous works \cite{InvTypes}). We consider first a model of arbitrary cardinality and then specialize the results to the countable case, where statements are slightly simplified by the use of sequences.

\smallskip
We would like to thank Tom\'as Ibarluc\'ia and the referee for pointing out a number of mistakes and suggesting various improvements.

\section{Rosenthal compacta}

This section surveys part of the work of Bourgain, Fremlin and Talagrand \cite{BFT} on relatively compact subsets of the first Baire class. Nothing here is needed in the rest of the paper, since we will repeat all the definitions and will not refer to it in the proofs.

\subsection{Baire 1 function}

Let $X$ be a Polish space.

\begin{defi}
A function $f:X\to \mathbb R$ is of Baire class 1 if it can be written as the pointwise limit of a sequence of continuous functions.

The set of Baire class 1 functions on $X$ is denoted by $B_1(X)$. We will always equip it with the topology of pointwise convergence, that is the topology induced from $\mathbb R^X$.
\end{defi}

The following is the well-known characterization of Baire class 1 functions due to Baire. See later Theorem \ref{th_baireone} for a proof in a slightly different framework.

\begin{fait}\label{fait_baire}
Let $f:X\to \mathbb R$. The following are equivalent:

(i) $f$ is of Baire class 1;

(ii) $f^{-1}(F)$ is a $G_\delta$ for every closed $F\subseteq \mathbb R$;

(iii) For any closed $F\subseteq X$, $f|_F$ has a point of continuity in the induced topology on $F$.
\end{fait}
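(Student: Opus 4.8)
The plan is to establish the cycle $(i)\Rightarrow(ii)\Rightarrow(iii)\Rightarrow(i)$. First I would record the harmless reduction to a bounded function: composing $f$ with a homeomorphism $\mathbb R\cong(0,1)$ preserves all three conditions, since continuity, the Borel class of preimages, and the continuity points of restrictions are all invariant under post-composition with a homeomorphism. So I may assume $f:X\to[0,1]$. I would also use throughout that condition (ii) is equivalent, by passing to complements, to the statement that $f^{-1}(U)$ is $F_\sigma$ for every open $U\subseteq\mathbb R$.

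For $(i)\Rightarrow(ii)$, write $f=\lim_n f_n$ with $f_n$ continuous and prove, for $U$ open, the identity
\[
 f^{-1}(U)=\bigcup_{p\ge 1}\ \bigcup_{N\ge 1}\ \bigcap_{n\ge N}\ \bigl\{x: d(f_n(x),\mathbb R\setminus U)\ge 1/p\bigr\}.
\]
Each set in braces is closed because $f_n$ and $y\mapsto d(y,\mathbb R\setminus U)$ are continuous, so the inner intersection is closed and the whole right-hand side is $F_\sigma$. The inclusion ``$\supseteq$'' follows by letting $n\to\infty$ and using continuity of the distance function; ``$\subseteq$'' follows from $f_n(x)\to f(x)$ together with $d(f(x),\mathbb R\setminus U)>0$.

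For $(ii)\Rightarrow(iii)$, fix a closed $F\subseteq X$; being closed in a Polish space it is itself Polish, hence Baire. Writing $g=f|_F$, I would prove the stronger statement that the continuity points of $g$ are dense in $F$. Given a nonempty relatively open $W\subseteq F$ and $\varepsilon>0$, cover $\mathbb R$ by countably many open sets $U_k$ of diameter $\le\varepsilon$; each $g^{-1}(U_k)$ is $F_\sigma$ in $F$ by (ii), so the closed subspace $\overline W$ is a countable union of closed sets $C_{k,j}$ with $C_{k,j}\subseteq g^{-1}(U_k)$. By the Baire category theorem some $C_{k,j}$ has nonempty interior in $\overline W$, and since $W$ is dense in $\overline W$ this yields a nonempty relatively open $W'\subseteq W$ with $\operatorname{diam} g(W')\le\varepsilon$. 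Iterating with $\varepsilon\to 0$ along a shrinking sequence of such sets and invoking completeness produces a continuity point of $g$ inside $W$.

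The step I expect to be the \textbf{main obstacle} is $(iii)\Rightarrow(i)$. Here I would first run a Cantor--Bendixson-type derivation driven by (iii): set $X_0=X$, let $X_{\xi+1}$ be $X_\xi$ minus the relative continuity points of $f|_{X_\xi}$, and take intersections at limit stages. Hypothesis (iii) applied to the closed set $X_\xi$ guarantees these continuity points are nonempty whenever $X_\xi\neq\emptyset$, so the closed sets $X_\xi$ strictly decrease; since $X$ is second countable this forces $X_{\xi_0}=\emptyset$ at some countable ordinal $\xi_0$, stratifying $X$ into countably many layers on each of which $f$ is relatively continuous. The genuinely delicate part is then to assemble from this stratification a \emph{single sequence} of continuous functions converging pointwise to $f$ (the Lebesgue--Hausdorff construction). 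I would stress that the naive route---approximating $f$ uniformly by dyadic truncations and invoking that uniform limits of Baire class 1 functions are Baire class 1---fails, because the level sets $f^{-1}\bigl([k2^{-n},(k+1)2^{-n})\bigr)$ are only differences of $F_\sigma$ sets and need not be simultaneously $F_\sigma$ and $G_\delta$; the transfinite layering is precisely what is needed to control the interpolation and to secure pointwise, rather than uniform, convergence.
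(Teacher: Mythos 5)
Your steps (i)$\Rightarrow$(ii) and (ii)$\Rightarrow$(iii) are correct and standard. The genuine gap is in (iii)$\Rightarrow$(i), and it occurs \emph{before} the Lebesgue--Hausdorff assembly you defer: the derivation $X_{\xi+1}=X_\xi\setminus\{\text{continuity points of } f|_{X_\xi}\}$ does not produce closed sets. The set of continuity points of $f|_{X_\xi}$ is a $G_\delta$ in $X_\xi$, not in general relatively open, so $X_{\xi+1}$ is merely a relative $F_\sigma$; hypothesis (iii) speaks only of closed sets, so it cannot be applied at the next stage, and the strict decrease you invoke fails. Concretely, take $X=\mathbb R$ and $f$ Thomae's function ($f(p/q)=1/q$ in lowest terms, $f=0$ on irrationals), which satisfies (iii): any nonempty closed $F$ either contains an irrational, which is a continuity point of $f$ itself, or $F\subseteq \mathbb Q$ is countable closed and hence has an isolated point. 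Your derivation gives $X_1=\mathbb Q$, which is not closed, and $f|_{\mathbb Q}$ is nowhere continuous, so $X_2=X_1$: the process stalls at a nonempty set without ever contradicting (iii). The classical repair is to fix $\varepsilon>0$ and use the oscillation derivative $F\mapsto F'_\varepsilon=\{x\in F:\operatorname{osc}(f|_F)(x)\ge\varepsilon\}$, which \emph{is} closed (oscillation is upper semicontinuous); by (iii) it is a proper subset of every nonempty closed $F$, since a continuity point has oscillation $0<\varepsilon$, so the transfinite iteration reaches $\emptyset$ at a countable ordinal, and the slices $X_\xi\setminus X_{\xi+1}$ are relatively open in the closed set $X_\xi$, hence ambiguous ($\Delta^0_2$), which is exactly what the assembly step needs.

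Even granting the corrected derivation, your argument for (iii)$\Rightarrow$(i) stops short: the passage from the stratification to a single pointwise-convergent sequence of continuous functions is named (``the Lebesgue--Hausdorff construction'') but not carried out, and it is the heart of the implication --- one must build, for each $\varepsilon$, a countable partition into $\Delta^0_2$ pieces on which $f$ oscillates by at most $\varepsilon$, check that indicator functions of such pieces are of Baire class 1, assemble an $\varepsilon$-uniform Baire-1 approximant, and then prove the nontrivial lemma that a uniform limit of Baire class 1 functions is of Baire class 1. (Your diagnosis of why the naive dyadic truncation fails is right, but the standard cure starting from (ii) is the reduction property of $F_\sigma$ sets, which disjointifies the covering $f^{-1}(U_k)$ into $\Delta^0_2$ pieces directly, rather than transfinite layering.) For comparison: the paper does not prove this Fact at all; it cites it as classical and instead proves the $\{0,1\}$-valued analogue, Theorem \ref{th_baireone}, on totally disconnected compacta, where the corresponding hard step (ii)$\Rightarrow$(i) is a one-line interpolation of a clopen set between $\bigcup_{k\in i}F_k$ and $\bigcap_{k\in i}G_k$ --- total disconnectedness eliminates precisely the assembly problem on which your sketch is silent.
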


\subsection{Relatively compact subsets of Baire 1 functions}

One motivation of \cite{BFT} was to answer some questions left open by Rosenthal \cite{Rosen} about the space of Baire class 1 functions on a Polish space. The authors end up proving much more general results. We will only give the particular statements relevant to us.

\begin{fait}[\cite{BFT}]\label{fait_bft}
Let $A\subseteq C(X)$ be a countable pointwise bounded family of continuous functions from $X$ to $\mathbb R$, then the following are equivalent:

(i) $A$ is relatively sequentially compact in $\mathbb R^X$ (every sequence of elements of $A$ has a subsequence which converges in $\mathbb R^X$);

(ii) $A$ is relatively compact in $B_1(X)$;

(iii) all the functions in the closure of $A$ in $\mathbb R^X$ are Borel-measurable;

(iv) the closure of $A$ in $\mathbb R^X$ has cardinality $<2^{2^{\aleph_0}}$;

(v) If $\alpha<\beta$ and $(x_n:n<\omega)$ is a sequence in $A$, then there is an $I\subseteq \omega$ such that
\[\{t:t\in X, x_n(t)\leq \alpha~  \forall n\in I, x_n(t)\geq \beta~  \forall n\in \omega \setminus I\}=\emptyset.\]
\end{fait}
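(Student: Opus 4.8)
The plan is to prove the cyclic chain of implications (i)$\Rightarrow$(ii)$\Rightarrow$(iii)$\Rightarrow$(iv)$\Rightarrow$(v)$\Rightarrow$(i), where the first three arrows are soft and the last two carry the real content. For (i)$\Rightarrow$(ii): since $A$ is pointwise bounded, its closure $\bar A$ in $\mathbb R^X$ sits inside a product of compact intervals, hence is compact by Tychonoff, so for relative compactness in $B_1(X)$ the only thing to check is $\bar A\subseteq B_1(X)$. But relative sequential compactness says each $f\in\bar A$ is a subsequential pointwise limit of elements of $A\subseteq C(X)$, hence of Baire class $1$. Then (ii)$\Rightarrow$(iii) is immediate since Baire class $1$ functions are Borel-measurable (e.g.\ by Fact~\ref{fait_baire}(ii)). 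For (iii)$\Rightarrow$(iv): a Borel-measurable function on the Polish space $X$ is determined by a Borel code, so there are at most $2^{\aleph_0}$ of them, whence $|\bar A|\leq 2^{\aleph_0}<2^{2^{\aleph_0}}$.

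The implication (iv)$\Rightarrow$(v) I would prove by contraposition. Assume (v) fails: there are $\alpha<\beta$ and a sequence $(x_n)$ in $A$ such that for \emph{every} $I\subseteq\omega$ the set $S_I:=\{t: x_n(t)\le\alpha\ \forall n\in I,\ x_n(t)\ge\beta\ \forall n\notin I\}$ is nonempty; pick $t_I\in S_I$. For an ultrafilter $\mathcal U$ on $\omega$, the pointwise ultralimit $f_{\mathcal U}(t):=\lim_{n\to\mathcal U}x_n(t)$ lies in $\bar A$. Evaluating on the witnesses gives $f_{\mathcal U}(t_I)\le\alpha$ when $I\in\mathcal U$ and $f_{\mathcal U}(t_I)\ge\beta$ when $I\notin\mathcal U$, so $\mathcal U$ is recovered from $f_{\mathcal U}$ as $\{I: f_{\mathcal U}(t_I)\le\alpha\}$. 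Distinct ultrafilters therefore give distinct functions in $\bar A$, and since $|\beta\omega|=2^{2^{\aleph_0}}$ we obtain $|\bar A|\ge 2^{2^{\aleph_0}}$, contradicting (iv).

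The remaining implication (v)$\Rightarrow$(i) is the main obstacle and is essentially Rosenthal's theorem. Given a sequence $(x_n)$ in $A$, I want a pointwise convergent subsequence. The engine is a Ramsey-type dichotomy: for any sequence of subsets $(B_n)$ of $X$ there is an infinite $M\subseteq\omega$ such that $(B_n)_{n\in M}$ is either \emph{convergent} (for each $t$, the set $\{n\in M: t\in B_n\}$ is finite or cofinite in $M$) or \emph{Boolean independent} (every intersection $\bigcap_{n\in F}B_n\cap\bigcap_{n\in G}(X\setminus B_n)$ with $F,G\subseteq M$ disjoint finite is nonempty). Applying this to the sets $\{x_n\le\alpha\}$ for the countably many rational pairs $\alpha<\beta$ and diagonalising, either some pair yields a Boolean independent subsequence—which, after upgrading the complement to the genuine gap condition $x_n\ge\beta$, contradicts (v)—or along a single subsequence $\chi_{\{x_n\le\alpha\}}$ converges pointwise for every rational $\alpha$, forcing $\liminf_n x_n(t)=\limsup_n x_n(t)$ everywhere and hence pointwise convergence.

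The two delicate points, which I expect to absorb most of the work, are the proof of the dichotomy lemma itself (a tree/Ramsey argument) and the passage from ``$x_n>\alpha$'' to the sharper ``$x_n\ge\beta$'' needed to match the exact statement of (v). The latter I would handle by choosing the pair $(\alpha,\beta)$ so that the failure of convergence genuinely oscillates across the gap on a witnessing family of points, and then refining the independent subsequence so that the complement sets can be replaced by $\{x_n\ge\beta\}$ without destroying nonemptiness of the Boolean intersections.
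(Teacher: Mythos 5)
First, a point of reference: the paper gives no proof of this statement at all. It is quoted as a Fact from \cite{BFT}, and the author explicitly says that nothing in Section 1 is needed or proved in the paper (the model-theoretic analogues, Theorem \ref{th_baireone} and Proposition \ref{prop_bftreprovedkappa}, are proved from scratch later). So your attempt can only be judged on its own merits, and on those merits it has two genuine gaps, both located exactly at the arrows that carry the content. The first is (i)$\Rightarrow$(ii): you assert that relative sequential compactness makes every $f\in\bar A$ a subsequential pointwise limit of elements of $A$. That does not follow. $\mathbb R^X$ is not first countable when $X$ is uncountable, and a point in the closure of a countable set need not be the limit of \emph{any} sequence from the set; it is merely a cluster point, and in a non-first-countable space cluster points of a sequence need not be subsequential limits. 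The statement ``every point of $\bar A$ is a sequential limit from $A$'' is precisely the angelicity of $B_1(X)$ (the paper's Fact quoting \cite{BFT} Theorem 3F), i.e.\ it is one of the main conclusions of the Bourgain--Fremlin--Talagrand theory, available only \emph{after} the equivalences are established. Your one-line justification is therefore circular: in the actual development, (i)$\Rightarrow$(ii) is a deep implication, not a soft one.

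The second gap is in the independent horn of (v)$\Rightarrow$(i). The Ramsey dichotomy you invoke yields \emph{finite} Boolean independence, but condition (v) quantifies over the \emph{infinite} combinations $S_I$ with $I\subseteq\omega$, and finite independence does not contradict (v). Concretely: let $X=\omega$ discrete (a Polish space), enumerate the pairs of disjoint finite sets $(F,G)$ by distinct points $t_{F,G}$, and set $x_n(t_{F,G})=\alpha$ if $n\in F$, $=\beta$ if $n\in G$, and $=(\alpha+\beta)/2$ otherwise. The pairs $\bigl(\{x_n\le\alpha\},\{x_n\ge\beta\}\bigr)$ are finitely independent, yet $S_I=\emptyset$ for \emph{every} $I$ (at each point all but finitely many $x_n$ take the value $(\alpha+\beta)/2$, which lies strictly inside the gap), so (v) holds for this sequence --- and indeed $x_n\to(\alpha+\beta)/2$ pointwise. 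So ``independent subsequence $\Rightarrow$ contradiction with (v)'' fails as stated; bridging finite to infinite Boolean combinations is the real crux and needs an extra argument (the finite intersection property of the closed sets $\{x_n\le\alpha\}$, $\{x_n\ge\beta\}$ after compactifying, or a Cantor-scheme refinement exploiting completeness of $X$ --- compare how the paper's own analogue, Proposition \ref{prop_bftreprovedkappa}, gets the infinite consistency for free from compactness of the type space). You flag the $\alpha$-versus-$\beta$ upgrade as delicate, but this finite-versus-infinite issue is the harder one and your sketch does not address it. For the record, your soft arrows (ii)$\Rightarrow$(iii) and (iii)$\Rightarrow$(iv) are fine, and your (iv)$\Rightarrow$(v) ultrafilter-counting argument is correct and standard (there the failure of (v) hands you the infinite combinations directly); a sounder architecture would prove (i)$\Rightarrow$(v) by the same easy contraposition and concentrate the BFT machinery in (v)$\Rightarrow$(ii) together with angelicity for (ii)$\Rightarrow$(i).
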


The last condition is essentially the NIP property for continuous logic.

\begin{defi}\label{defi_angelic}
A regular Hausdorff space is \emph{angelic} if

(i) every relatively countably compact set is relatively compact;

(ii) the closure of a relatively compact set is precisely the set of limits of its sequences.
\end{defi}

\begin{fait}[\cite{BFT} Theorem 3F]
The space $B_1(X)$ equipped with the topology of pointwise convergence is angelic.
\end{fait}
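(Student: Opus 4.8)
The plan is to reduce the two clauses of angelicity (Definition \ref{defi_angelic}) to a single sequential statement and then feed in Rosenthal's dichotomy. Throughout, fix a subset $A\subseteq B_1(X)$ that is relatively countably compact for the pointwise topology; note this forces $A$ to be pointwise bounded, since for each $x\in X$ the set $\{f(x):f\in A\}$ must be relatively countably compact, hence bounded, in $\mathbb R$. I would establish two facts: (a) every sequence in $A$ has a subsequence converging pointwise to a function lying in $B_1(X)$, and (b) every $f$ in the pointwise closure $\bar A$ is the pointwise limit of a sequence drawn from $A$. Granting these, angelicity follows cleanly. By pointwise boundedness $\bar A$ is a closed subset of a product $\prod_{x}[-M_x,M_x]$, hence compact in $\mathbb R^X$; by (b) each point of $\bar A$ is a sequential limit from $A$, and being the unique cluster point of that sequence it lies in $B_1(X)$ by relative countable compactness, so $\bar A\subseteq B_1(X)$ and $\bar A$ is compact in $B_1(X)$. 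This is clause (i), and clause (ii) is precisely statement (b).

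The engine for (a) is Rosenthal's $\ell_1$ dichotomy, i.e.\ the combinatorics behind condition (v) of Fact \ref{fait_bft}. Given a pointwise bounded sequence $(f_n)$ in $B_1(X)$, a Ramsey-theoretic argument on $[\omega]^\omega$ (a Nash--Williams / Galvin--Prikry partition applied to the colouring recording, for a fixed rational pair $\alpha<\beta$, which functions in a finite block lie below $\alpha$ versus above $\beta$) yields a dichotomy: either some subsequence converges pointwise, or some subsequence is \emph{independent}, meaning that for a fixed $\alpha<\beta$ and every $I\subseteq\omega$ the set $\{x:f_n(x)\le\alpha\ \forall n\in I,\ f_n(x)\ge\beta\ \forall n\notin I\}$ is nonempty --- exactly the negation of (v). I would rule out the independent alternative for sequences in $A$: a standard construction produces from it a cluster point $g\in\mathbb R^X$ together with a nonempty closed $F\subseteq X$ on which $g$ oscillates by at least $\beta-\alpha$ in every relative neighbourhood, so $g|_F$ has no point of continuity and $g$ is not of Baire class $1$ by Fact \ref{fait_baire}(iii). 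This contradicts relative countable compactness in $B_1(X)$, so only the convergent alternative survives; the resulting limit is a cluster point of the sequence, hence lies in $B_1(X)$, proving (a).

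The hard part, and the main obstacle, is (b): upgrading relative sequential compactness to the statement that every $f\in\bar A$ is reached by an actual sequence. The difficulty is that the pointwise topology is carried by the possibly uncountable index set $X$, so the naive recipe --- pick $f_n\in A$ agreeing with $f$ to within $1/n$ on the first $n$ points of a fixed countable dense $D\subseteq X$, then extract a convergent subsequence by (a) --- only yields a limit $g\in B_1(X)$ with $g=f$ on $D$, and two Baire class $1$ functions agreeing on a dense set can differ (witness $0$ and the indicator of a point off $D$). To close this gap I would prove a \emph{localisation lemma}: for every nonempty closed $F\subseteq X$ and every $\varepsilon>0$ there exist $g\in A$ and a nonempty relatively open $U\subseteq F$ with $|g-f|<\varepsilon$ on $U$; the proof uses relative sequential compactness from (a) together with the Baire class $1$ structure of cluster points via Fact \ref{fait_baire}(iii). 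Iterating this lemma along a Cantor--Bendixson-style induction on closed subsets of $X$, and exploiting that the Polish space $X$ is Baire, I would patch the local approximants into a single sequence $(f_n)\subseteq A$ whose $\limsup$ and $\liminf$ agree with $f$ at every point, i.e.\ $f_n\to f$ pointwise. Arranging the patching so that one fixed sequence works simultaneously at all points of $X$ --- rather than a different sequence over each piece --- is the delicate step; this is exactly where the argument departs from the classical second-countable picture and must be organised as a transfinite induction governed by the Baire category theorem.
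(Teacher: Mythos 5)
First, a point of order: the paper does not prove this statement at all --- it is quoted verbatim from Bourgain--Fremlin--Talagrand (their Theorem 3F), and the paper's self-contained Section 2 only proves model-theoretic analogues of it by a different route. So your proposal must stand on its own, and it does not. There is a logical slip already in your part (a): relative countable compactness of $A$ in $B_1(X)$ guarantees only that \emph{some} cluster point of a given sequence lies in $B_1(X)$; exhibiting \emph{one} cluster point $g$ of the independent subsequence that fails Fact \ref{fait_baire}(iii) is therefore not a contradiction. What you need is that \emph{every} pointwise cluster point of an independent sequence fails to be of Baire class 1 (or a subsequence with that property), and that is where the real work sits: since your $f_n$ are Baire class 1 rather than continuous, the sets $\{f_n\le\alpha\}$ and $\{f_n\ge\beta\}$ are merely Borel, so the passage from finite independence patterns (which is all a Nash--Williams/Galvin--Prikry colouring argument gives you) to nonemptiness of all \emph{infinite} patterns --- the negation of Fact \ref{fait_bft}(v) --- is not a compactness triviality on a non-compact Polish $X$; it, and the non-measurability of ultrafilter limits, require a Cantor-scheme selection $I\mapsto x_I$ with definability, i.e.\ the Baire-category machinery that constitutes the bulk of \cite{BFT}. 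None of this is supplied.

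The more serious gap is part (b), which is clause (ii) of angelicity and is the actual content of Theorem 3F: you announce it rather than prove it. You correctly diagnose why the naive extraction along a countable dense $D\subseteq X$ fails, but the ``localisation lemma'' is stated without proof, and the patching step is explicitly conceded to be ``the delicate step'' governed by an unspecified transfinite induction. No mechanism is offered for why one fixed sequence can be made to converge at every point of $X$: naive patching of local approximants along a Cantor--Bendixson-style decomposition produces iterated limits --- Baire class 2 behaviour --- not a single pointwise convergent sequence, and overcoming exactly this is the theorem. So the proposal reduces the statement to its hardest step and stops there. It is worth noting that the paper's own substitute for this argument, in the model-theoretic setting, avoids the patching entirely: the reduction lemma preceding Proposition \ref{prop_frechetinv} (adapted from \cite{TodorBookTopo}) first cuts an arbitrary $Z$ down to a subfamily of size at most $\kappa$, and Proposition \ref{prop_frechetkappa} then produces the convergent family explicitly via a Morley sequence and Fact \ref{fact_morleyloc}. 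A complete proof along your lines would have to either carry out the BFT construction in full or find an analogous explicit construction of the approximating sequence; as written, neither is present.
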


Condition (i) in the definition of angelic was shown to hold for $B_1(X)$ by Rosenthal in \cite{Rosen}. He also made progress towards (ii).

\begin{defi}
A compact Hausdorff space $K$ is a \emph{Rosenthal compactum} if it can be embedded in the space $B_1(X)$ of functions of Baire class 1 over some Polish space $X$.
\end{defi}

A second countable (equiv. metrizable) Hausdorff compact space is a Rosenthal compactum, but the converse need not hold. Rosenthal compacta are Hausdorff compact spaces which share some nice properties with metrizable spaces even though they might not be metrizable themselves. In particular, a Rosenthal compactum is angelic.
For more on Rosenthal compacta, see for example \cite{TodorRosen} and references therein.

\section{Model theoretic results}

\subsection{Generalized Baire 1 functions}

We are interested in properties of functions from some topological space $X$ to $\{0,1\}$. To keep notations short, we will write $f^{-1}(0)$ and $f^{-1}(1)$ for the preimages of the singletons $\{0\}$ and $\{1\}$.

\begin{defi}
Let $X$ be any topological space. We define the following subspaces of the set of functions from $X$ to $\{0,1\}$.

$\cdot$ $B^{\S}(X)$: the set of functions $f:X\to \{0,1\}$ such that $\overline {f^{-1}(0)}\cap \overline {f^{-1}(1)}$ has empty interior.

$\cdot$ $B_r^{\S}(X)$: the set of functions $f:X\to \{0,1\}$ such that $f|_F \in B^{\S}(F)$ for any closed non-empty $F\subseteq X$.
\end{defi}

In all that follows, we fix an infinite cardinal $\kappa$. We will consider spaces $X$ with the following property:

\begin{description}
\item[$\boxtimes_\kappa$] $X$ is a compact Hausdorff totally disconnected space admitting a base of the topology of size at most $\kappa$.

\end{description}

Let $\finiteset$ be the set of finite subsets of $\kappa$. Let $\filter$ be the filter on $\finiteset$ generated by the sets $T_j=\{i\in \finiteset: i\supseteq j\}$ where $j$ ranges in $\finiteset$.

We will say that a family $(x_i:i\in \finiteset)$ of points in a topological space $Y$ is $\filter$-convergent to $x_*\in Y$ if for any neighborhood $U$ of $x_*$, the set $\{i\in \finiteset : x_i\in U\}$ belongs to $\filter$. We then write $x_*=\lim_{\filter} x_i$.

\begin{defi}
For $X$ satisfying $\boxtimes_\kappa$, we let $B_1(X)$ be the set of functions $f:X\to \{0,1\}$ which can be written as $\lim_{\filter} f_i$, where $(f_i:i\in \finiteset)$ is a family of continuous functions from $X$ to $\{0,1\}$.
\end{defi}

\begin{thm}\label{th_baireone}
Let $X$ satisfy $\boxtimes_\kappa$ and let $f:X\to \{0,1\}$. Then (i) and (ii) are equivalent and (iii) implies them. If $\kappa=\aleph_0$, then the three statements are equivalent.

(i) $f\in B_1(X)$;

(ii) $f^{-1}(1)$ can be written both as $\bigcup_{i<\kappa} F_i$ and as $\bigcap_{i<\kappa} G_i$, where the $F_i$'s are closed and the $G_i$'s open.

(iii) $f\in B_r^{\S}(X)$;
\end{thm}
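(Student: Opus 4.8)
This is a generalization of the classical Baire characterization (Fact~\ref{fait_baire}) from the second-countable case to spaces satisfying $\boxtimes_\kappa$, with the filter $\filter$ playing the role formerly played by sequences. Let me think about what each implication requires.

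First, let me understand the three conditions. Condition (i) says $f = \lim_\filter f_i$ for continuous $\{0,1\}$-valued $f_i$. Condition (ii) is the "$f^{-1}(1)$ is both $F_\sigma$ and $G_\delta$ (at scale $\kappa$)" condition. Condition (iii) says the restriction of $f$ to any nonempty closed $F$ has the property that $\overline{f^{-1}(0)}\cap\overline{f^{-1}(1)}$ (closures in $F$) has empty interior — this is the analogue of "$f|_F$ has a point of continuity."

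Let me reason about the implications I'd expect to be easy versus hard.

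**(i) $\Rightarrow$ (ii).** I'd start here since it should be the most mechanical. If $f = \lim_\filter f_i$, then for each point $x$, the value $f(x)$ is eventually equal to $f_i(x)$ along the filter. Since $X$ is totally disconnected compact with a base of size $\le\kappa$, each clopen set $f_i^{-1}(1)$ is a basic-type set. The plan is to express $f^{-1}(1) = \{x : \lim_\filter f_i(x) = 1\}$. Because convergence along $\filter$ means membership in a set belonging to the filter, and the filter is generated by the $\kappa$-many sets $T_j$, I can write $f^{-1}(1)$ using a $\kappa$-indexed union (over the generators $j$) of the set of $x$ that land in $1$ for all $i \supseteq j$, and this "for all $i \supseteq j$" intersection of clopen sets is closed. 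Dually, using $f_i^{-1}(0)$ gives the $G_\delta$-type ($\bigcap_{i<\kappa} G_i$) description. So both presentations of $f^{-1}(1)$ come out of rewriting the $\filter$-limit condition, using that the filter has a generating family of size $\kappa$.

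**(ii) $\Rightarrow$ (i).** Here I want to build the net $(f_i)$. Given $f^{-1}(1) = \bigcup_{\alpha<\kappa} F_\alpha = \bigcap_{\alpha<\kappa} G_\alpha$, I have at each index $\alpha$ a closed set inside $f^{-1}(1)$ and an open set containing it. By total disconnectedness and compactness I can sandwich a clopen set $C_\alpha$ with $F_\alpha \subseteq C_\alpha \subseteq G_\alpha$. The idea is to index a continuous function $f_i$ by a finite set $i \in \finiteset$ that "commits" to finitely many of the constraints — say $f_i = $ the indicator of a clopen set assembled from the $C_\alpha$ for $\alpha\in i$ — and arrange that as $i$ grows (i.e., along $\filter$), $f_i(x) \to f(x)$ pointwise. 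For $x$ with $f(x)=1$, $x \in F_\alpha$ for some $\alpha$, so once $\alpha \in i$ the value is forced to $1$; for $x$ with $f(x)=0$, $x \notin G_\alpha$ for some $\alpha$, so once $\alpha \in i$ the value is forced to $0$. Getting the bookkeeping so that a single finite-set-indexed clopen set does both jobs simultaneously is the delicate part — I expect to define $f_i$ as the indicator of something like $\bigcup_{\alpha\in i}(C_\alpha \cap \bigcap_{\beta\in i,\,\text{relevant}} \dots)$, and this is where I expect the main technical obstacle to lie: making one clopen set respect the lower constraints $F_\alpha$ and upper constraints $G_\alpha$ consistently as $i$ ranges over $\finiteset$.

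**(iii) $\Rightarrow$ (ii), and equivalence when $\kappa=\aleph_0$.** For (iii)$\Rightarrow$(ii) I'd use a transfinite "point of continuity" peeling argument: since $f|_F$ has the empty-interior property on every closed $F$, I can strip off, in $\kappa$ stages, the relatively clopen pieces where $f$ is locally constant, writing $f^{-1}(1)$ as a $\kappa$-length increasing union of closed sets (and dually). This is the analogue of the classical derivation that a function with a point of continuity on every closed set is $F_\sigma \cap G_\delta$. For the reverse direction when $\kappa=\aleph_0$ — which is the extra content that makes all three equivalent in the countable case — I'd argue contrapositively: if $f\notin B_r^\S(X)$, there is a closed $F$ on which $\overline{f^{-1}(0)}\cap\overline{f^{-1}(1)}$ has nonempty interior, and in a second-countable compact space this lets me build a Cantor-scheme of clopen sets on which $f$ oscillates, contradicting the $F_\sigma\cap G_\delta$ description of (ii). The restriction to $\kappa=\aleph_0$ here is exactly because this Baire-category/Cantor-scheme argument needs the countable base; for larger $\kappa$ the classical machinery breaks, which is why the theorem only claims (iii)$\Rightarrow$(i),(ii) in general. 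I expect the (ii)$\Rightarrow$(i) net-construction to be where the genuinely new "replace sequences by $\filter$-families" difficulty concentrates.
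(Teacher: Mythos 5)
Your outlines for (i)$\Rightarrow$(ii), (iii)$\Rightarrow$(ii), and the countable-case equivalence are sound and close in spirit to the paper's proof. (i)$\Rightarrow$(ii) is exactly the paper's rewriting $f^{-1}(1)=\bigcup_{i\in\finiteset}\bigcap_{j\supseteq i}f_j^{-1}(1)$, with the dual expression coming from $f^{-1}(0)$. Your transfinite peeling for (iii)$\Rightarrow$(ii) is a correct constructive variant of the paper's argument, which instead takes in one step the union $G$ of all clopen $U$ such that $U\cap f^{-1}(1)$ is a union of $\kappa$ closed sets, uses the size-$\kappa$ base to see $G\cap f^{-1}(1)$ is again such a union, and derives a contradiction by applying $B^{\S}$ to $X\setminus G$; your peeling works for the same underlying reasons (a relatively clopen subset of a closed set is closed in $X$, and the strictly increasing chain of complements forces termination within $\kappa$ stages, giving at most $\kappa\cdot\kappa=\kappa$ closed pieces --- note the pieces form a union of $\kappa$ closed sets, not an increasing chain of closed sets as you wrote, since the partial unions are traces on open sets). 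Your contrapositive Cantor-scheme for (ii)$\Rightarrow$(iii) at $\kappa=\aleph_0$ carries the same Baire-category content as the paper's direct argument, which writes $F=\bigcup_{i<\omega}\bigl((F_i\cap F)\cup(G_i^c\cap F)\bigr)$ and applies the Baire property of the compact set $F$.

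The one genuine gap is exactly where you flagged it: (ii)$\Rightarrow$(i). The route you sketch --- fix clopen $C_\alpha$ with $F_\alpha\subseteq C_\alpha\subseteq G_\alpha$ once and for all, then assemble $f_i$ from $\{C_\alpha:\alpha\in i\}$ --- cannot be completed. The candidate $\bigcup_{\alpha\in i}C_\alpha$ handles points of $f^{-1}(1)$ but fails on $f^{-1}(0)$: if $f(x)=0$ then $x\notin G_\beta$ for some $\beta$, yet nothing prevents $x\in C_\alpha$ for other $\alpha\in i$, since $C_\alpha\subseteq G_\alpha$ imposes no constraint relative to $G_\beta$; and no correction built from per-index clopen data can repair this, because a clopen set sandwiched between $f^{-1}(1)$ and a single $G_\beta$ need not exist ($f^{-1}(1)$ is not closed). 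The paper's move, which dissolves the bookkeeping entirely, is to interpolate \emph{per finite set $i$} rather than per index $\alpha$: the set $\bigcup_{k\in i}F_k$ is closed and satisfies $\bigcup_{k\in i}F_k\subseteq f^{-1}(1)\subseteq\bigcap_{k\in i}G_k$ with the right-hand side open, so compactness and total disconnectedness give a single clopen $O_i$ with $\bigcup_{k\in i}F_k\subseteq O_i\subseteq\bigcap_{k\in i}G_k$. Setting $f_i=1_{O_i}$: if $f(x)=1$ then $x\in F_k$ for some $k$, so $f_i(x)=1$ whenever $k\in i$; if $f(x)=0$ then $x\notin G_k$ for some $k$, so $f_i(x)=0$ whenever $k\in i$; hence $f=\lim_{\filter}f_i$.
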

\begin{proof}
(i) $\Rightarrow$ (ii): Write $f=\lim_{\filter} f_i$. Then for $x\in X$, $f(x)=1$ if and only if $x\in \bigcup_{i\in \finiteset} \bigcap_{j\supseteq i} f^{-1}_j(1)$. Since each $\bigcap_{j\supseteq i} f^{-1}_j(1)$ is closed and since the complement $f^{-1}(0)$ can be written in the same way, (ii) is satisfied.

(ii) $\Rightarrow$ (i): Write $f^{-1}(1)=\bigcup_{i<\kappa} F_i = \bigcap_{i<\kappa} G_i$. For $i\in \finiteset$, let $O_i\subseteq X$ be a clopen set such that $\bigcup_{k\in i} F_k \subseteq O_i$ and $O_i \subseteq \bigcap_{k\in i} G_k$. Set $f_i=1_{O_i}$. Then $f=\lim_{\filter} f_i$.

(iii) $\Rightarrow$ (ii): Assume that $f\in B_r^{\S}(X)$ and let $A=f^{-1}(1)$. It is enough to show that $A$ is the union of $\leq \kappa$ closed sets. Suppose this is not the case. Define $\mathcal U=\{U\subseteq X$ clopen $:U\cap A$ can be written as $\bigcup_{i<\kappa} F_i$, $F_i$ closed$\}$. Let $G=\bigcup \mathcal U$. Then, using $\boxtimes_\kappa$, $G\cap A$ is a union of $\kappa$ many closed sets. Let $F=X\setminus G$, a closed non-empty subset of $X$. Then $f|_F\in B^{\S}(F)$, which implies that there is some clopen $V$ such that $V\cap F\neq \emptyset$ and either $(V\cap F)\cap A=\emptyset$ or $V\cap F \subseteq A$. Then both $(V\cap F)\cap A$ and $(V\cap G)\cap A$ can be written as a union of $\kappa$ many closed sets, hence $V\in \mathcal U$. Contradiction.

(ii) $\Rightarrow$ (iii): Assume that $\kappa=\aleph_0$ and take a closed $F\subseteq X$. Write $f^{-1}(1)=\bigcup_{i<\omega} F_i=\bigcap_{i<\omega} G_i$ as in (ii). Then $F=\bigcup_{i<\omega} (F_i\cap F)\cup (G_i^c\cap F)$. Let $U\subseteq F$ be a relatively open subset. As $F$ is compact, it is Baire and for some $i$, either $F_i\cap U$ or $G_i^c\cap U$ has non-empty interior relative to $F$. This implies that $U$ cannot lie in $\overline{f^{-1}(0)} \cap \overline{f^{-1}(1)}$ and we conclude that $f|_F \in B^{\S}(F)$.
\end{proof}

\begin{lemme}\label{lem_top}
Let $\pi:L\to K$ be a continuous surjection between two compact spaces. Let $f:K\to \{0,1\}$ be a function. Assume that $\pi^*f\in B^{\S}_r(L)$, then $f\in B^{\S}_r(K)$.
\end{lemme}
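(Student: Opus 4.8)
The plan is to reduce the statement, via preimages and an irreducibility argument, to the fact that an \emph{irreducible} continuous surjection of compacta is almost open, and then to transport the defining condition of $B^{\S}$ across such a map.

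First I would reduce to a single closed set and drop the subscript $r$ on the target. Fix a closed nonempty $F\subseteq K$ and put $L_F=\pi^{-1}(F)$, a closed subset of $L$; the restriction $\pi_F:=\pi|_{L_F}:L_F\to F$ is again a continuous surjection of compacta, and $\pi_F^*(f|_F)=(\pi^*f)|_{L_F}$. Since restricting a $B^{\S}_r$ function to a closed set keeps it in $B^{\S}_r$, we have $\pi_F^*(f|_F)\in B^{\S}_r(L_F)$. Thus it suffices to prove the single-level statement: if $\rho:L'\to K'$ is a continuous surjection of compacta and $\rho^*g\in B^{\S}_r(L')$, then $g\in B^{\S}(K')$; applying it to $\rho=\pi_F$, $g=f|_F$ for every closed $F$ yields $f\in B^{\S}_r(K)$.

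To prove the single-level statement I would first replace $\rho$ by an irreducible surjection. By Zorn's lemma (using that along a chain of closed sets surjecting onto $K'$ the intersection still surjects, by compactness of the fibres) choose a closed $L''\subseteq L'$ minimal with $\rho(L'')=K'$; then $\rho'':=\rho|_{L''}$ is a continuous, closed (a continuous map from a compact space to a Hausdorff space is closed), irreducible surjection, and $(\rho'')^*g=(\rho^*g)|_{L''}\in B^{\S}(L'')$. Now suppose toward a contradiction that $g\notin B^{\S}(K')$, so that with $A_i=g^{-1}(i)$ the set $\overline{A_0}\cap\overline{A_1}$ contains a nonempty open $U$. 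I would show $(\rho'')^{-1}(U)\subseteq\overline{(\rho'')^{-1}(A_0)}\cap\overline{(\rho'')^{-1}(A_1)}$. For any nonempty open $V\subseteq(\rho'')^{-1}(U)$, irreducibility makes $U_V:=\{x\in K':(\rho'')^{-1}(x)\subseteq V\}=K'\setminus\rho''(L''\setminus V)$ a nonempty open subset of $\rho''(V)\subseteq U$; as $U\subseteq\overline{A_i}$, the open set $U_V$ meets $A_i$, and choosing a point $y$ in the nonempty fibre of such an $x\in U_V\cap A_i$ gives $y\in V\cap(\rho'')^{-1}(A_i)$. Hence $(\rho'')^{-1}(A_i)$ is dense in the nonempty open set $(\rho'')^{-1}(U)$ for both $i=0,1$, so $(\rho'')^{-1}(U)$ is a nonempty open set inside $\overline{((\rho'')^*g)^{-1}(0)}\cap\overline{((\rho'')^*g)^{-1}(1)}$, contradicting $(\rho'')^*g\in B^{\S}(L'')$.

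The main obstacle is exactly the failure of $\pi^{-1}(\overline A)\subseteq\overline{\pi^{-1}(A)}$ for a general continuous map: without extra hypotheses density cannot be pulled back through $\pi$, so the naive attempt to move $U\subseteq\overline{A_0}\cap\overline{A_1}$ directly up to $L$ fails. Passing to an irreducible restriction is what repairs this, since the almost-openness $U_V\neq\emptyset$ lets me produce, inside any open $V$ upstairs, a point of $K'$ whose entire fibre lies in $V$. This is the one place where more than continuity and surjectivity is used, and it is why the argument needs the freedom to shrink $L$ to a minimal closed surjecting subset---and hence why the hypothesis is naturally $B^{\S}_r$ on $L$ rather than merely $B^{\S}$.
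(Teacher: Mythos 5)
Your proof is correct and rests on exactly the same mechanism as the paper's: reduce to a single application of $B^{\S}$, pass by Zorn's lemma to a minimal closed subset of $L$ surjecting onto $K$, and exploit the resulting almost-openness of the irreducible map, i.e.\ that $K\setminus\pi[L\setminus V]$ is nonempty and open for nonempty open $V$. The only (cosmetic) difference is directional --- the paper finds one monochromatic open set upstairs and pushes it down to contradict density downstairs, whereas you pull density of both colours upstairs to contradict $B^{\S}$ there --- so this counts as essentially the paper's argument.
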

\begin{proof}
Restricting the situation to a closed subset, it is enough to show that $f\in B^{\S}(K)$. Assume not, then $\overline{f^{-1}(0)}\cap \overline{f^{-1}(1)}$ contains some non-empty open set $V$. Let $K'=\overline V$. Then every element of $V$ is in the closure of both $f^{-1}(0)\cap V$ and $f^{-1}(1)\cap V$. Hence also $f\notin B^{\S}(K')$ and $K'=(\overline{f^{-1}(0)\cap K'} )\cap (\overline{f^{-1}(1)\cap K'})$. Replacing $K$ by $K'$ we may (and will) assume that $K= \overline{f^{-1}(0)}= \overline{f^{-1}(1)}$.

\smallskip \noindent
\underline{Claim}: There is a minimal closed $L'\subseteq L$ such that $\pi[L']=K$.

Proof: By Zorn's lemma, it is enough to show that if we are given a decreasing sequence $(L_i:i<\alpha)$ of closed subsets of $L$ such that $\pi[L_i]=K$, then $\pi[\cap L_i]=K$. Let $b\in K$, then the sequence $(L_i \cap \pi^{-1}(\{b\}):i<\alpha)$ is a non-increasing sequence of non-empty closed subsets of the compact set $L$. Therefore its intersection is non-empty. This proves the claim.

\smallskip
Now we may replace $L$ by $L'$ as given by the claim. Hence from now on, for any proper closed $F\subseteq L$, $\pi[F]\neq K$. As $\pi^*f \in B^{\S}(L)$, $L \neq \overline{\pi^*f^{-1}(0)} \cap \overline{\pi^*f^{-1}(1)}$. Hence at least one of $\pi^*f^{-1}(0)$ or $\pi^*f^{-1}(1)$ has non empty interior. Assume for example that there is a non-empty open set $W \subseteq \pi^*f^{-1}(0)$. Let $F=L\setminus W$. By the minimality property of $L$, $U = K \setminus \pi[F]$ is a non-empty open set. But $\pi^{-1}(U)\subseteq W \subseteq \pi^*f^{-1}(0)$. Hence $U\subseteq f^{-1}(0)$ contradicting the fact that $K= \overline{f^{-1}(1)}$.
\end{proof}

\subsection{NIP formulas and the space of types}\label{sec_type}

We let $S_x(M)$ denote the space of complete types over $M$ in the variable $x$. Also if $\Delta(x;y)$ is a formula or a set of formulas, then $S_\Delta(M)$ denotes the space of $\Delta$-types over $M$.

Recall that a formula $\phi(x;y)$ is NIP if and only if there does not exist (in the monster model $\monster$) an infinite set $A$ of $|x|$-tuples and for each $I\subseteq A$, a $|y|$-tuple $b_I$ such that $$\monster \models \phi(a;b_I) \iff a\in I,\quad \text{ for all }a\in A.$$
One obtains an equivalent definition if one exchanges the roles of $x$ and $y$. Also the definition is equivalent to saying that for any indiscernible sequence $(a_i:i<\omega)$ of $|x|$-tuple and every $|y|$-tuple $b$, there are only finitely many $i<\omega$ for which we have $\neg (\phi(a_i;b)\leftrightarrow \phi(a_{i+1};b))$. (Again, one may exchange the roles of $x$ and $y$.) See \cite{NIPbook} for details on this.

Everything in this section already appeared in \cite{InvTypes}, but we recall it here for convenience. The following fact is well-known (at least when the full theory is NIP, but the proof is the same in the local case).

\begin{fait}\label{fact_morleyloc}
Assume that the formula $\phi(x;y)$ is NIP. Let $p,q\in S_x(\monster)$ be $A$-invariant types and we let $p_\phi,q_\phi$ denote the restrictions of $p$ and $q$ res\-pectively to instances of $\phi(x;y)$ and $\neg \phi(x;y)$. If $p^{(\omega)}|_A=q^{(\omega)}|_A$, then $p_\phi=q_\phi$.
\end{fait}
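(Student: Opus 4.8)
The plan is to reduce $p_\phi=q_\phi$ to the statement that $p$ and $q$ agree on every instance $\phi(x;b)$ with $b\in\monster$, and to derive a contradiction from disagreement by producing an $A$-indiscernible sequence along which $\phi(\,\cdot\,;b)$ alternates. So I would fix $b$ and suppose, toward a contradiction, that $\phi(x;b)\in p$ while $\neg\phi(x;b)\in q$ (the reverse case being symmetric, swapping $p$ and $q$). I would then build, by recursion, a \emph{mixed Morley sequence} over $Ab$: choose $a_i\models p|_{Ab\,a_{<i}}$ when $i$ is even and $a_i\models q|_{Ab\,a_{<i}}$ when $i$ is odd. Because $\phi(x;b)\in p$ and $\neg\phi(x;b)\in q$, we immediately get $\models\phi(a_i;b)$ for $i$ even and $\models\neg\phi(a_i;b)$ for $i$ odd, so $\phi(\,\cdot\,;b)$ alternates along the entire sequence. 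The sole purpose of putting $b$ in the base set is to force this alternation.

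The heart of the argument is the sub-claim that $\tp\big((a_i)_{i<\omega}/A\big)=p^{(\omega)}|_A$, independently of the pattern of $p$'s and $q$'s used; in particular $(a_i)_{i<\omega}$ is $A$-indiscernible. I would prove this by induction on finite initial segments. The key preliminary remark is that a realization of $p|_{Ab\,a_{<i}}$ is in particular a realization of the restriction $p|_{A\,a_{<i}}$ (and likewise for $q$), so the presence of $b$ plays no role here and the induction takes place entirely over $A$. For the inductive step, assume $\tp(a_{<n}/A)=p^{(n)}|_A=q^{(n)}|_A$, where the equality of the two is exactly the hypothesis $p^{(\omega)}|_A=q^{(\omega)}|_A$ (its base case being $p|_A=q|_A$). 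If, say, $a_n\models q|_{A\,a_{<n}}$, compare with a genuine Morley sequence $(e_i)$ of $q$ over $A$: since $a_{<n}\equiv_A e_{<n}$, an automorphism fixing $A$ pointwise and sending $e_{<n}$ to $a_{<n}$ preserves $q$ by $A$-invariance, hence carries $q|_{A\,e_{<n}}$ to $q|_{A\,a_{<n}}$, which yields $\tp(a_{\le n}/A)=q^{(n+1)}|_A=p^{(n+1)}|_A$; the case $a_n\models p|_{A\,a_{<n}}$ is identical. Thus the type over $A$ is the Morley-power type, and being the type of a Morley sequence it is $A$-indiscernible.

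At this point $(a_i)_{i<\omega}$ is an infinite $A$-indiscernible sequence of $|x|$-tuples and $b$ is a $|y|$-tuple for which $\neg\big(\phi(a_i;b)\leftrightarrow\phi(a_{i+1};b)\big)$ holds for \emph{every} $i<\omega$; this directly contradicts the alternation characterization of NIP recalled above (which is stated in precisely this $|x|$-versus-$|y|$ form, so no exchange of roles is needed), and the contradiction completes the proof. I expect the main obstacle to be the sub-claim that the $p/q$ pattern does not affect the type over $A$---specifically, recognizing that one may safely construct the sequence over $Ab$ to secure the $\phi$-alternation while still running the invariance argument over $A$ to secure indiscernibility, since realizing a restriction of $p$ or $q$ over $Ab\,a_{<i}$ automatically realizes its restriction over $A\,a_{<i}$. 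The remaining manipulations of invariant types are routine.
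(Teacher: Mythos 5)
Your proposal is correct and follows essentially the same route as the paper: the same alternating construction $a_i\models p\upharpoonright Ab\,a_{<i}$ for $i$ even and $a_i\models q\upharpoonright Ab\,a_{<i}$ for $i$ odd, with $A$-indiscernibility coming from the hypothesis $p^{(\omega)}|_A=q^{(\omega)}|_A$ and the contradiction from infinite alternation of $\phi(x;b)$ against NIP. The only difference is that you explicitly carry out the induction (via $A$-invariance and an automorphism over $A$) showing $\tp\big((a_i)_{i<\omega}/A\big)=p^{(\omega)}|_A$ regardless of the $p/q$ pattern, a step the paper states without proof.
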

\begin{proof}
Assume that for example $p\vdash \phi(x;b)$ and $q\vdash \neg \phi(x;b)$ for some $b\in \monster$. Build inductively a sequence $(a_i:i<\omega)$ such that:

$\cdot$ when $i$ is even, $a_i \models p \upharpoonright Aba_{<i}$;

$\cdot$ when $i$ is odd, $a_i\models q\upharpoonright Aba_{<i}$.

Then by hypothesis, the sequence $(a_i:i<\omega)$ is indiscernible (its type over $A$ is $p^{(\omega)}|_A=q^{(\omega)}|_A$) and the formula $\phi(x;b)$ alternates infinitely often on it, contradicting NIP.
\end{proof}

The following proposition and proof come from \cite[Lemma 2.8]{InvTypes}. It is inspired by point (ii) in the definition of angelic (Definition \ref{defi_angelic}).

\begin{prop}\label{prop_frechetkappa}
Let $A$ be a set of parameters of size $\kappa$ and let $\Delta=\{\phi_i(x;y_i)\}$ be a set of NIP formulas of size $\leq \kappa$. Let $q$ be a global $\Delta$-type finitely satisfiable in $A$. Then there is a family $(b_i:i\in \finiteset)$ of points in $A$ such that $\lim_{\filter}(\tp_{\Delta}(b_i/\monster))=q$.
\end{prop}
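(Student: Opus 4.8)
The plan is to produce the family $(b_i : i \in \finiteset)$ by a direct enumeration-and-selection argument, so that the $\filter$-limit of the types $\tp_\Delta(b_i/\monster)$ is forced to equal $q$. First I would fix an enumeration of the ``data'' that $q$ must match: since $\Delta$ has size $\leq \kappa$ and we only care about $\Delta$-formulas, the relevant instances of the form $\phi_i(x;c)$ (with $c \in \monster$) can be indexed by ordinals $<\kappa$. Enumerate as $(\psi_\alpha : \alpha<\kappa)$ a list of all $\Delta$-formulas with parameters in $\monster$, where each $\psi_\alpha$ is either $\phi_i(x;c)$ or its negation, arranged so that the list records exactly which such formulas lie in $q$. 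For each finite $i \in \finiteset \subseteq \mathfrak{P}_{<\omega}(\kappa)$, the set $\{\psi_\alpha : \alpha \in i\}$ is a finite conjunction of formulas all of which belong to $q$; in particular this finite conjunction is consistent with $q$, hence (being a $\Delta$-formula realized by $q$) it is realized in $\monster$.

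The key step is to use finite satisfiability in $A$. Because $q$ is finitely satisfiable in $A$, every finite subset of $q$---in particular the finite conjunction $\bigwedge_{\alpha \in i}\psi_\alpha$ indexed by $i$---is satisfied by some point of $A$. So I would choose, for each $i \in \finiteset$, a point $b_i \in A$ such that $b_i \models \bigwedge_{\alpha \in i}\psi_\alpha$. This is exactly the selection the filter $\filter$ is designed to exploit: the sets $T_j = \{i : i \supseteq j\}$ generate $\filter$, and for $i \supseteq j$ the point $b_i$ satisfies all of $\psi_\alpha$ for $\alpha \in j$.

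It then remains to check the convergence $\lim_{\filter} \tp_\Delta(b_i/\monster) = q$. A basic neighborhood of $q$ in $S_\Delta(\monster)$ is determined by finitely many $\Delta$-formulas $\psi_{\alpha_1},\dots,\psi_{\alpha_n}$ that $q$ decides; setting $j = \{\alpha_1,\dots,\alpha_n\}$, every $b_i$ with $i \supseteq j$ satisfies each $\psi_{\alpha_k}$ and hence $\tp_\Delta(b_i/\monster)$ lies in that neighborhood. Thus $\{i : \tp_\Delta(b_i/\monster) \in U\} \supseteq T_j \in \filter$ for every basic neighborhood $U$ of $q$, which is precisely $\filter$-convergence to $q$.

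The main obstacle, and the only place where NIP is genuinely needed, is the requirement that the $b_i$ converge to the \emph{whole} $\Delta$-type $q$ and not merely witness each formula separately: I must make sure each $b_i$ correctly decides \emph{all} the coordinates indexed by $i$ simultaneously, which is what the conjunction realization above supplies. I expect the NIP hypothesis on the formulas in $\Delta$ to enter not in this selection itself but in guaranteeing that the filter-limit is well-defined and genuinely a complete $\Delta$-type---the delicate point being that without a tameness assumption one cannot control the limit along the large filter $\filter$ (indexed by $\kappa$ rather than $\omega$). I would verify carefully that finite satisfiability is preserved and that no formula of $\Delta$ is left undecided in the limit, falling back on Fact \ref{fact_morleyloc} if the completeness of the limiting $\Delta$-type needs justification.
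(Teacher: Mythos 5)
There is a genuine gap, and it is right at the start: you claim that the relevant instances $\phi_i(x;c)$ with $c\in\monster$ ``can be indexed by ordinals $<\kappa$.'' This is false. The type $q$ is a \emph{global} $\Delta$-type, so it decides $\phi_i(x;c)$ for every tuple $c$ in the monster model, and $|\monster|$ is vastly larger than $\kappa$; there is no enumeration of all these instances in order type $\kappa$. Once this is noticed, your convergence check collapses: a basic neighborhood of $q$ in $S_\Delta(\monster)$ is given by finitely many instances with \emph{arbitrary} parameters from $\monster$, and these need not appear among your $\psi_\alpha$'s, so choosing $b_i\models\bigwedge_{\alpha\in i}\psi_\alpha$ does not force $\tp_\Delta(b_i/\monster)$ into such a neighborhood for large $i$. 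Your own diagnosis confirms the problem: your argument never genuinely uses NIP (filter-limits along an ultrafilter are always well-defined complete $\Delta$-types, so there is nothing for NIP to do in your last paragraph), yet the proposition fails without NIP. Indeed, a family indexed by $\finiteset$ has at most one $\filter$-limit, so at most $\kappa^{|\finiteset|}=2^\kappa$ types arise as such limits, whereas for an IP formula there can be $2^{2^\kappa}$ global $\phi$-types finitely satisfiable in $A$ (Poizat-style coheir counting). Any proof that does not invoke NIP in an essential way must be wrong.

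The paper's proof handles exactly this difficulty. It extends $q$ to a complete type $\tilde q$ finitely satisfiable in $A$, builds a Morley sequence $I=(b'_n:n<\omega)$ of $\tilde q$ over $A$, and enumerates only the formulas of the \emph{small} type $\tilde q|_{AI}$ as $(\phi_k(x;c_k):k<\kappa)$ --- this restriction really does have size $\leq\kappa$ after passing to a reduct of the language. The points $b_i\in A$ are chosen to realize $\bigwedge_{k\in i}\phi_k(x;c_k)$, just as in your selection step, but this only guarantees that any ultrafilter limit $\tilde q'=\lim_{\mathcal D}\tp(b_i/\monster)$ (for $\mathcal D\supseteq\filter$) agrees with $\tilde q$ on $AI$. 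The crucial remaining step --- the one your proposal is missing --- is to upgrade agreement over $AI$ to agreement on all $\Delta$-instances over $\monster$: agreement over $AI$ gives $\tilde q'^{(\omega)}|_A=\tilde q^{(\omega)}|_A$ by induction, and then Fact \ref{fact_morleyloc} (this is precisely where NIP is used, via alternation along an indiscernible sequence) yields that $\tilde q$ and $\tilde q'$ have the same $\Delta$-part, contradicting the choice of $\mathcal D$ concentrating on a formula where they differ. So the skeleton of your selection argument survives, but only after replacing the impossible global enumeration by an enumeration over $A$ together with a Morley sequence, and adding the NIP transfer step.
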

\begin{proof}
Taking a reduct if necessary, we may assume that the language has size at most $\kappa$. Extend $q$ to some complete type $\tilde q$ finitely satisfiable in $A$ and let $I=(b'_i:i<\omega)$ be a Morley sequence of $\tilde q$ over $A$. List the formulas in $\tilde q|_{AI}$ as $(\phi_k(x;c_k):k<\kappa)$. For $i\in \finiteset$, take $b_i\in A$ realizing $\bigwedge_{k\in i} \phi_k(x;c_k)$. Assume that the family $(\tp_{\Delta}(b_i/\monster):i\in \finiteset)$ does not converge to $q$ along $\filter$ and let $\phi(x;c)\in q$ witness it. Let $\mathcal D$ be an ultrafilter on $\finiteset$ extending $\filter$ and containing $\{i\in \finiteset:\models \neg \phi(b_i;c)\}$. Let $\tilde q'=\lim_{\mathcal D}(\tp(b_i/\monster))$. Then as $\mathcal D$ contains $\filter$, we have $\tilde q'|_{AI}=\tilde q|_{AI}$. By an easy induction, this implies $\tilde q'^{(\omega)}|_A=\tilde q^{(\omega)}|_A$. By Fact \ref{fact_morleyloc}, $\tilde q$ and $\tilde q'$ agree on $\Delta$-formulas, but this is a contradiction since $\tilde q'\vdash \neg \phi(x;c)$.
\end{proof}

\begin{cor}
Let $\Delta$ be as in the previous proposition and let $A\subseteq S_\Delta(\monster)$ be a set of $\Delta$-types of size at most $\kappa$. Let $q\in S_\Delta(\monster)$ be in the topological closure of $A$. Then there is a family $(p_i:i\in \finiteset)$ of elements of $A$ such that $\lim_{\filter}p_i=q$.
\end{cor}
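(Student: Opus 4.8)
The plan is to reduce this to Proposition \ref{prop_frechetkappa} by realizing the global types in $A$ as honest parameters in a larger monster. Fix a bigger monster model $\monster'\succ\monster$ and, for each $p\in A$, choose $a_p\in\monster'$ realizing $p$, so that $\tp_\Delta(a_p/\monster)=p$ (this is possible since $\monster'$ is sufficiently saturated). Set $A'=\{a_p:p\in A\}$, a set of parameters of size $\leq\kappa$. The first point I would check is that $q$, viewed as a $\Delta$-type over $\monster$, is finitely satisfiable in $A'$: a basic neighborhood of $q$ in $S_\Delta(\monster)$ is cut out by finitely many instances $\phi_j(x;c_j)^{\varepsilon_j}$ lying in $q$, and since $q\in\overline A$ some $p\in A$ lies in this neighborhood, whence $a_p$ satisfies all these instances. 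Thus every finite fragment of $q$ is realized in $A'$.

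Next I would extend $q$ to a \emph{global} $\Delta$-type $\tilde q$ over $\monster'$ that is still finitely satisfiable in $A'$. This is available because the sets $\{a\in A':\monster'\models\psi(a;c)\}$, for $\psi(x;c)$ ranging over $q$, have the finite intersection property by the previous step; extending the filter they generate to an ultrafilter $\mathcal D$ on $A'$ and putting $\tilde q=\lim_{\mathcal D}\tp_\Delta(a/\monster')$ produces a global $\Delta$-type over $\monster'$, finitely satisfiable in $A'$, with $\tilde q|_\monster=q$.

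Now Proposition \ref{prop_frechetkappa}, applied inside $\monster'$ with parameter set $A'$, the family $\Delta$, and the type $\tilde q$, yields a family $(b_i:i\in\finiteset)$ of elements of $A'$ with $\lim_{\filter}\tp_\Delta(b_i/\monster')=\tilde q$. Writing $b_i=a_{p_i}$ with $p_i\in A$, it remains to restrict back to $\monster$. The restriction map $S_\Delta(\monster')\to S_\Delta(\monster)$ is continuous, and continuous maps preserve $\filter$-limits (if $U$ is a neighborhood of the image of the limit, its preimage is a neighborhood of the limit, so the relevant index set lies in $\filter$). Hence $\lim_{\filter}\tp_\Delta(b_i/\monster)=\tilde q|_\monster=q$. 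Since $\tp_\Delta(b_i/\monster)=\tp_\Delta(a_{p_i}/\monster)=p_i$, the family $(p_i:i\in\finiteset)$ of elements of $A$ satisfies $\lim_{\filter}p_i=q$, as required.

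All the substance is in Proposition \ref{prop_frechetkappa}; the corollary is a bookkeeping reduction. The only points demanding care are the passage between the two monster models---ensuring $|A'|\leq\kappa$ so the proposition still applies and that finite satisfiability is correctly inherited from ``$q\in\overline A$''---and the preservation of $\filter$-convergence under restriction. The latter is immediate from continuity, so I do not expect a genuine obstacle; the reduction is robust precisely because $\filter$-convergence is a purely topological notion.
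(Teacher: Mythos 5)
Your proof is correct and takes essentially the same route as the paper's: realize each type in $A$ by a point $a_p$ in a larger monster model, extend $q$ to a global $\Delta$-type $\tilde q$ finitely satisfiable in $\{a_p : p\in A\}$, apply Proposition \ref{prop_frechetkappa} there, and restrict back to $\monster$. You merely make explicit two steps the paper leaves implicit---the ultrafilter construction of $\tilde q$ from the finite satisfiability of $q$ in the realizations, and the preservation of $\filter$-limits under the continuous restriction map---both of which you verify correctly.
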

\begin{proof}
Realize each type $p\in A$ by an element $a_p$ in some larger monster model $\monster_1$. Then $q$ extends to a type $\tilde q$ over $\monster_1$ which is finitely satisfiable in $\{a_p :p\in A\}$. By the previous proposition, there is a family $(a_{p_i}:i\in \finiteset)$ such that $\lim_{\filter}(\tp_{\Delta}(a_{p_i}/\monster'))=\tilde q$. Restricting to $\monster$, we see that the family $(p_i :i\in \finiteset)$ converges to $q$.
\end{proof}

\subsection{Invariant $\phi$-types}

Let $T$ be any theory, $M\models T$ and $\phi(x;y)$ an NIP formula. Assume that both $M$ and $T$ have size at most $\kappa$. In what follows, $\phi^1$ means $\phi$ and $\phi^0$ means $\neg \phi$.

Let $\Inv_\phi(M)\subset S_{\phi}(\monster)$ denote the space of global $M$-invariant $\phi$-types. Given $p\in \Inv_\phi(M)$, define the function $d_p : S_y(M)\to \{0,1\}$ by $d_p(q)=1$ if $p\vdash \phi(x;b)$, for some/any $b\in q(\monster)$ and $d_p(q)=0$ otherwise.

Note that $S_y(M)$ satisfies property $\boxtimes_\kappa$ above.
Our goal now is to show that $d_p\in B_r^{\S}(S_y(M))$.

Let $S_{fs}(M)\subset S_y(\monster)$ be the space of global types in the variable $y$ finitely satisfiable in $M$ and let $S_{fs}^{\phi^{\opp}}(M)$ be the space of global $M$-finitely satisfiable $\phi^{\opp}$-types (where $\phi^{\opp}(y;x)=\phi(x;y)$). We have two natural projection maps:

$\pi: S_{fs}(M) \to S_y(M)$, which assigns to a type its restriction to $M$, and

$\pi_0: S_{fs}(M) \to S^{\phi^{\opp}}_{fs}(M)$, which sends a type to its reduct to instances of $\phi^{\opp}$.

Given $p\in \invtypes$, define $f_p:\fstypes \to \{0,1\}$ as $\pi^*(d_p)$. Also given $s\in S_x(M)$, let $a\models s$ and define the map $\hat s:\fstypes \to \{0,1\}$ by $\hat s(q)=1$ if $q\vdash \phi(a;y)$ and $\hat s(q)=0$ otherwise. Note that this map factors through $S^{\phi^{\opp}}_{fs}(M)$ and is a continuous function on $\fstypes$.

The following is shown in \cite{InvTypes}, Proposition 2.11. The moreover part is Proposition 2.13 there.

\begin{fait}\label{fait_invkappa}
The map $f_p$ factors through $S^{\phi^{\opp}}_{fs}(M)$ and moreover 

$f_p\in \overline{\{\hat s : s\in S_x(M)\}}$.
\end{fait}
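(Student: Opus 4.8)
The plan is to prove the stronger ``moreover'' clause, $f_p\in\overline{\{\hat s:s\in S_x(M)\}}$, and to deduce the factorization from it. Indeed each $\hat s$ factors through $\pi_0$ (as noted just before the statement), and the class of functions on $\fstypes$ that factor through $\pi_0$—i.e. that are constant on the fibers of $\pi_0$—is closed under pointwise limits: if $q,q'$ lie in a common fiber then $g(q)=\lim_\lambda \hat{s_\lambda}(q)=\lim_\lambda\hat{s_\lambda}(q')=g(q')$ for any pointwise limit $g$. Hence anything in $\overline{\{\hat s\}}$, in particular $f_p$, again factors through $\pi_0$. So it suffices to realize $f_p$ as a pointwise limit, which, the target being $\{0,1\}$ with the product topology, amounts to the finite approximation: \emph{for every finite family $q_1,\dots,q_n\in\fstypes$ there is $s\in S_x(M)$, say $s=\tp(a/M)$ with $a\in\monster$, such that $[\phi(a;y)\in q_j]=d_p(q_j|_M)=:\beta_j$ for all $j\le n$.}

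First I would record why the obvious candidate fails, since this dictates the whole strategy. One is tempted to fix $b_j\models q_j|_M$ and take $a\models p$ over $M b_1\cdots b_n$, so that $\phi(a;b_j)=[\phi(x;b_j)\in p]=\beta_j$. But $\hat s(q_j)=[\phi(a;y)\in q_j]$ is the value of $q_j$ on a realization \emph{generic over $a$}, and an $M$-invariant and an $M$-finitely satisfiable type need not commute; so this ``generic'' value is not controlled by $p$ and may differ from $\beta_j$. The witness $a$ will therefore in general \emph{not} realize $p$; it must be chosen as a kind of boundary point adapted to the $q_j$, the bits $\beta_j$ serving only to prescribe the target pattern.

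The route I would take is to establish the displayed finite approximation by a consistency argument fueled by finite satisfiability and NIP. For each $j$, use Proposition \ref{prop_frechetkappa} (or a coheir Morley sequence) to present $q_j$ as a $\filter$-limit of types realized in $M$; this lets one rephrase ``$q_j\vdash\phi(x;y)^{\beta_j}$'' as an eventual, $\filter$-stable behavior of $\phi$ along a family drawn from $M$. The bits $\beta_j$ are produced from $p$ by its action along these families, the $M$-invariance of $p$ together with Fact \ref{fact_morleyloc} pinning down the $\phi$-pattern of $p$ on Morley sequences of the $q_j$. One then assembles these requirements into a single type $\Sigma(x)$ over a suitable parameter set and shows $\Sigma$ is consistent, the NIP hypothesis (bounded alternation) ensuring that the finitely many prescribed $\phi^{\opp}$-conditions are jointly satisfiable by one element $a$; any realization gives the desired $s=\tp(a/M)$.

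I expect the consistency of $\Sigma$ to be the main obstacle: it is exactly the point where one must convert the ``$x$-side'' data carried by the invariant $\phi$-type $p$ into a \emph{realizable} ``$y$-side'' ($\phi^{\opp}$) condition, circumventing the failure of commutation—in effect an honest-definition argument for $\phi$. A secondary, purely technical point is the passage from the countable to the general case: wherever an $\aleph_0$ argument would extract a convergent subsequence one must instead work with the filter $\filter$ on $\finiteset$ and with $\filter$-convergent families, exactly as in Proposition \ref{prop_frechetkappa} and its corollary.
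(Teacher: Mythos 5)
First, note that the paper itself contains no proof of this Fact: it is imported verbatim from \cite{InvTypes} (the factorization is Proposition 2.11 there, the ``moreover'' clause Proposition 2.13), so your attempt must be measured against that argument. Your two reduction steps are correct and even slightly streamline the organization of \cite{InvTypes}: the class of functions on $\fstypes$ constant on fibers of $\pi_0$ is indeed closed under pointwise limits, so the factorization follows from the closure claim; and since $\hat s$ is well defined on $\tp(a/M)$ (every $q\in\fstypes$ is $M$-invariant), the statement $f_p\in \overline{\{\hat s : s\in S_x(M)\}}$ is, as you say, exactly the finite approximation: for all $q_1,\dots,q_n\in\fstypes$ there is $a$ with $q_j\vdash \phi(a;y)^{\beta_j}$, $\beta_j=f_p(q_j)$. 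You also correctly diagnose why the naive choice $a\models p|_{Mb_1\cdots b_n}$ fails and that the issue is the non-commutation of invariant and finitely satisfiable types.

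The genuine gap is that the entire mathematical content of the Fact is this finite approximation, and your proposal does not prove it: it announces a plan (``assemble these requirements into a single type $\Sigma(x)$ \dots and show $\Sigma$ is consistent'') and then explicitly flags the consistency of $\Sigma$ as an expected obstacle. Moreover, the plan as stated cannot work literally. The condition ``$q_j\vdash\phi(a;y)^{\beta_j}$'' is not a condition on $a$ over any fixed small parameter set: to evaluate it one must realize $b_j\models q_j|_{Ma}$, i.e.\ \emph{after} $a$ is chosen, so there is no type $\Sigma(x)$ over a ``suitable parameter set'' whose mere consistency yields the required $a$. (If one fixes the $b_j$ first, the set $\{\phi(x;b_j)^{\beta_j}\}$ is trivially consistent---it lies in $p$---which shows consistency of any such $\Sigma$ is not the real issue; the difficulty is producing a realization whose type over $M$ already decides the $q_j$'s correctly, an honest-definitions-type statement, as you yourself observe.) The proof in \cite{InvTypes} requires an actual construction: roughly, one realizes Morley (coheir) sequences of the $q_j$ over $M$, assumes no $s\in S_x(M)$ produces the pattern $(\beta_j)$, and then, alternately choosing realizations of $p$ and further links of the coheir sequences, extracts for some fixed $j$ an infinite alternation of $\phi$ along an indiscernible sequence, contradicting NIP. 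That pigeonhole-plus-alternation argument is the key idea, and it is absent from your sketch; gesturing at Proposition \ref{prop_frechetkappa} and Fact \ref{fact_morleyloc} does not substitute for it. So: reductions correct, core missing.
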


\begin{prop}\label{prop_bftreprovedkappa}
The function $f_p$ is in $B^{\S}_r(\fstypes)$.
\end{prop}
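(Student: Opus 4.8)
The plan is to exploit the factorization in Fact \ref{fait_invkappa} together with the topological transfer principle of Lemma \ref{lem_top}. We know that $f_p = \pi^*(d_p)$ factors through the projection $\pi_0 : \fstypes \to S^{\phi^{\opp}}_{fs}(M)$, so there is a function $g_p$ on $S^{\phi^{\opp}}_{fs}(M)$ with $f_p = \pi_0^* g_p$. Since $\pi_0$ is a continuous surjection between compact spaces, Lemma \ref{lem_top} (with $L = \fstypes$, $K = S^{\phi^{\opp}}_{fs}(M)$, $\pi = \pi_0$, and $f = g_p$) tells us that it suffices to establish $f_p = \pi_0^* g_p \in B^{\S}_r(\fstypes)$, which is what we want anyway; so the content must come from showing $g_p \in B^{\S}_r(S^{\phi^{\opp}}_{fs}(M))$ and reading Lemma \ref{lem_top} in the direction "$f_p \in B^{\S}_r(L)$ implies $g_p \in B^{\S}_r(K)$." The genuinely useful input is the moreover part: $f_p \in \overline{\{\hat s : s\in S_x(M)\}}$, where each $\hat s$ is a \emph{continuous} $\{0,1\}$-valued function on $\fstypes$.

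First I would fix a closed nonempty $F \subseteq \fstypes$ and aim to show $f_p|_F \in B^{\S}(F)$, i.e. that $\overline{f_p^{-1}(0)\cap F}\cap \overline{f_p^{-1}(1)\cap F}$ has empty relative interior. Suppose not; passing to the closure of a nonempty relatively open witness, I reduce as in the proof of Lemma \ref{lem_top} to the case where $F = \overline{f_p^{-1}(0)\cap F} = \overline{f_p^{-1}(1)\cap F}$, so that $f_p$ takes both values densely on $F$. The key step is then to derive an alternation configuration contradicting NIP. Because $f_p$ lies in the closure of the continuous functions $\hat s$, and because $f_p$ oscillates densely between $0$ and $1$ on $F$, I expect to be able to extract a sequence of points $t_0, t_1, \dots$ in $F$ together with types $s_0, s_1, \dots \in S_x(M)$ whose realizations $a_n \models s_n$ witness $\phi(a_n; y)$ flipping truth value infinitely often: concretely, one arranges $\hat{s_n}(t_m)$ to disagree with $\hat{s_n}(t_{m+1})$ for the relevant pairs, which (translating through the definition of $\hat s$ as $q \vdash \phi(a; y)$) produces an indiscernible-like sequence on which $\phi$ alternates unboundedly. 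This is the step where the NIP hypothesis on $\phi$ is consumed, via the alternation characterization of NIP recalled after Fact \ref{fact_morleyloc}.

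The main obstacle will be making this extraction precise without the second-countability assumption, since the oscillation of $f_p$ on $F$ is governed by the filter $\filter$ on $\finiteset$ rather than by a single sequence. I would handle this by working inside the family $(\hat s)$ and using the density of $\{\hat s : s\in S_x(M)\}$-limits to choose, for each finite stage, a continuous approximant that separates the two dense value sets on a prescribed clopen piece; iterating along $\finiteset$ builds the alternating pattern as a $\filter$-indexed array, and an ultrafilter extending $\filter$ (as in the proof of Proposition \ref{prop_frechetkappa}) converts this array into an honest indiscernible sequence witnessing IP. An alternative, cleaner route is to avoid the extraction entirely: transfer the problem through $\pi_0$ with Lemma \ref{lem_top} so that one only needs $g_p \in B^{\S}_r(S^{\phi^{\opp}}_{fs}(M))$, and prove the latter directly from the combinatorial NIP condition on $\phi^{\opp}$, which is symmetric to $\phi$. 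I expect the argument to reduce, one way or another, to the statement that an NIP formula cannot have a densely alternating defining pattern on a closed set, which is exactly the local analogue of the Bourgain--Fremlin--Talagrand dichotomy recorded in Fact \ref{fait_bft}(v).
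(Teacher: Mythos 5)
Your setup is right (restrict to a closed $F$, reduce as in Lemma \ref{lem_top} to $F=\overline{f_p^{-1}(0)}=\overline{f_p^{-1}(1)}$, and use the moreover part of Fact \ref{fait_invkappa} to approximate $f_p$ by the continuous functions $\hat s$), but the combinatorial heart of your argument is the wrong pattern, and this is a genuine gap. You aim to extract points $t_0,t_1,\dots$ and types $s_0,s_1,\dots$ so that $\phi(a_n;y)$ ``flips truth value infinitely often'' and then to contradict NIP via the alternation characterization. That characterization requires the sequence of realizations to be \emph{indiscernible}: unbounded alternation on an arbitrary sequence is perfectly consistent with NIP (in DLO with $\phi(x;y)= x<y$, take $a_{2i}=-i$, $a_{2i+1}=i$, $b=0$). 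Here the $s_n$ are just types in $S_x(M)$ with no invariance or coherence between them, so there is no mechanism --- unlike in Fact \ref{fact_morleyloc} or Proposition \ref{prop_frechetkappa}, where a single invariant type generates a Morley sequence --- to make $(a_n)$ indiscernible; an ultrafilter limit along $\filter$ does not do this, and Ramsey-type extraction destroys the parameter witnessing the alternation. The paper's proof avoids indiscernibility entirely by building a \emph{shattering} pattern instead: having chosen $s_0,\dots,s_{l-1}$, one uses density of both values of $f_p$ together with Fact \ref{fait_invkappa} to pick $s_l$ so that the continuous function $\hat s_l$ takes both values on \emph{every} nonempty clopen cell $U_\eta=\bigcap_{k<l}\hat s_k^{-1}(\eta(k))$, $\eta\in\{0,1\}^l$. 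Then every cell at every level is nonempty, so for any $\eta:\omega\to\{0,1\}$ the type $\{\phi(a_l;y)^{\eta(l)}:l<\omega\}$ is consistent (any $q\in U_{\eta\restriction l}$ witnesses the finite fragments), i.e.\ $\{a_l:l<\omega\}$ is shattered --- this contradicts the \emph{definition} of NIP directly, with no indiscernible sequence needed. Note also that this makes your worry about second countability moot: the tree construction is a plain $\omega$-indexed induction for any $\kappa$, since IP is witnessed by a countable configuration.

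Your ``alternative, cleaner route'' does not work either, and you half-noticed why: Lemma \ref{lem_top} transfers $B^{\S}_r$ \emph{from} $L$ \emph{down to} $K$ ($\pi^*f\in B^{\S}_r(L)$ implies $f\in B^{\S}_r(K)$), so proving $g_p\in B^{\S}_r(S^{\phi^{\opp}}_{fs}(M))$ does not, via that lemma, give $f_p=\pi_0^*g_p\in B^{\S}_r(\fstypes)$; the reverse implication (pullbacks preserve $B^{\S}_r$) is unproven and not obvious, since $\pi_0$ need not be an open map and density of the two value sets does not transfer upward. In the paper the factorization through $\pi_0$ and Lemma \ref{lem_top} are used only \emph{after} this proposition, in Theorem \ref{th_mainth}, to push $B^{\S}_r$ from $\fstypes$ down to $S_y(M)$; the proposition itself must be proved upstairs, by the tree argument above.
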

\begin{proof}
Let $X\subseteq \fstypes$ be a non-empty closed set. From now on all topological notions are meant relative to $X$. If $f_p\notin B^{\S}(X)$, then restricting $X$ further as in the proof of Lemma \ref{lem_top}, we may assume that $\overline {f_p^{-1}(0)}=\overline{f_p^{-1}(1)}=X$. To any finite sequence $\bar V=(V_1,\ldots,V_n)$ of non-empty open subsets of $X$, we associate a type $s_{\bar V}\in S_x(M)$ as follows. For each $i$, $f_p$ is not constant on $V_i$ by hypothesis, hence there is a pair $(a_i,b_i)$ of points in $V_i$ such that $f_p(a_i)=1$ and $f_p(b_i)=0$. Having chosen such a pair for each $i$, we can apply Fact \ref{fait_invkappa} to find some $s_{\bar V}\in S_x(M)$ such that $\hat s_{\bar V}(a_i)=1$ and $\hat s_{\bar V}(b_i)=0$.

Now we construct a sequence $(s_l)_{l<\omega}$ of types in $S_x(M)$ inductively. Start with $\bar V_0=(X)$ and define $s_0=s_{\bar V_0}$ as above. Then set $\bar V_1=(U_0,U_1)$, where $U_0=\hat {s}_0^{-1}(0)$ and $U_1=\hat {s}_0^{-1}(1)$. Define $s_1=s_{\bar V_1}$. Having defined $s_k$, $k<l$, build a family of open sets $\bar V_l=(U_\eta:\eta \in \{0,1\}^{l})$ where $U_\eta=\bigcap_{k<l} \hat{s}_k^{-1}(\eta(k))$. The construction ensures that each $U_\eta$ is non-empty. Let $s_l=s_{\bar V_l}$.

Having done this for all $l<\omega$, take realizations $a_l$ of the types $s_l$, $l<\omega$. Then by construction, for any function $\eta: \omega \to \{0,1\}$, the type 

$\{\phi(a_l;y)^{\eta(l)}:l<\omega\}$

\noindent
is consistent, contradicting NIP.
\end{proof}

We are now ready to prove our main theorem.

\begin{thm}\label{th_mainth}
Let $p\in \invtypes$, then the function $d_p$ is in $B_r^{\S}(S_y(M))$.
\end{thm}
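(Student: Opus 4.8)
The plan is to deduce this immediately by feeding the previous proposition into the topological transfer lemma. Everything has been arranged so that $d_p$ and $f_p$ are related by exactly the pullback situation of Lemma~\ref{lem_top}: by definition $f_p = \pi^*(d_p) = d_p \circ \pi$, where $\pi : \fstypes \to S_y(M)$ is the restriction map. So I would set $L = \fstypes$, $K = S_y(M)$ and $f = d_p$ in Lemma~\ref{lem_top}.

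First I would check that $\pi$ satisfies the hypotheses of the lemma, namely that it is a continuous surjection between compact (Hausdorff) spaces. Both $\fstypes$ and $S_y(M)$ are Stone spaces of type algebras, hence compact Hausdorff, and $\pi$, being the restriction of a global type to $M$, is continuous. The only point needing a word is surjectivity: since $M$ is a model, every $q \in S_y(M)$ is finitely satisfiable in $M$, and so extends to a global type finitely satisfiable in $M$ (take any ultrafilter on the definable sets of $M$ concentrating on the defining formulas of $q$); this extension lies in $\fstypes$ and restricts to $q$. Thus $\pi$ is onto.

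With the hypotheses verified, I would invoke Proposition~\ref{prop_bftreprovedkappa}, which gives $f_p = \pi^*(d_p) \in B^{\S}_r(\fstypes)$. Lemma~\ref{lem_top} then yields $d_p \in B^{\S}_r(S_y(M))$, as desired.

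There is no real obstacle here: the genuine work has already been absorbed into Proposition~\ref{prop_bftreprovedkappa} (the NIP/combinatorial input, via the binary-tree construction and Fact~\ref{fait_invkappa}) and into Lemma~\ref{lem_top} (the descent of the $B^{\S}_r$ property along a continuous surjection, via the minimal preimage argument). The mild point to get right is the surjectivity of $\pi$, which is exactly the standard fact that types over a model have global coheirs; everything else is a direct citation of the two preceding results.
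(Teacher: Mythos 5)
Your proof is correct and is exactly the paper's argument: apply Proposition~\ref{prop_bftreprovedkappa} to get $f_p=\pi^*(d_p)\in B^{\S}_r(\fstypes)$ and descend along the continuous surjection $\pi$ via Lemma~\ref{lem_top} with $L=\fstypes$, $K=S_y(M)$. Your explicit verification that $\pi$ is onto (every type over the model $M$ is finitely satisfiable in $M$, hence has a global coheir) is a correct detail the paper leaves implicit.
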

\begin{proof}
By Proposition \ref{prop_bftreprovedkappa}, the function $f_p$ is in $B^{\S}_r(\fstypes)$. Using Lemma \ref{lem_top} with $L=\fstypes$ and $K=S_y(M)$, we conclude that $d_p$ is in $B^{\S}_r(S_y(M))$.
\end{proof}

We draw some consequences of this result. The following proof comes from \cite[Chapter 10, Corollary 4]{TodorBookTopo}.

\begin{lemme}
Let $Z\subseteq \invtypes$ be any subset and assume that $p\in \overline Z$. Then there is a subset $Z_0\subseteq Z$ of size at most $\kappa$ such that $p\in \overline Z_0$.
\end{lemme}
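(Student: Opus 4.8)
The statement asserts a countable-tightness-type property for the space $\invtypes$: if $p$ lies in the closure of a set $Z$ of invariant $\phi$-types, then $p$ is already in the closure of a subset $Z_0 \subseteq Z$ of size at most $\kappa$. The plan is to transfer this to a statement about the associated functions $d_p$ on $S_y(M)$, where Theorem \ref{th_mainth} tells us that each $d_p$ lies in $B_r^{\S}(S_y(M))$. The key observation is that the map $p \mapsto d_p$ is a topological embedding of $\invtypes$ into the space of $\{0,1\}$-valued functions on $S_y(M)$ equipped with the product (pointwise-convergence) topology: two invariant $\phi$-types are equal exactly when they agree on every $\phi(x;b)$, i.e. when their $d$-functions agree on every $q \in S_y(M)$, and a subbasic neighborhood of $p$ is cut out by finitely many conditions of the form $d_{p'}(q) = d_p(q)$.

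First I would reduce to the function-space picture: writing $D = \{d_p : p \in \invtypes\}$ with the pointwise topology, it suffices to show that $d_p$ lies in the closure of a $\kappa$-sized subfamily of $\{d_{p'} : p' \in Z\}$. Next, following the Bourgain--Fremlin--Talagrand / Todor\v{c}evi\'c strategy referenced in \cite{TodorBookTopo}, I would build $Z_0$ by an iterative approximation over the basis of $S_y(M)$. Since $S_y(M)$ satisfies $\boxtimes_\kappa$, fix a base $\mathcal B$ of clopen sets of size at most $\kappa$. The heart of the argument is to choose, for each finite collection of basic clopen sets and each prescribed pattern of values on them that is realized by some element of $Z$, a single witness in $Z$; because there are at most $\kappa$ basic sets and hence at most $\kappa$ such finite patterns, this produces a set $Z_0 \subseteq Z$ with $|Z_0| \leq \kappa$. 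I would then argue that $p$ remains in $\overline{Z_0}$: any basic neighborhood of $d_p$ is determined by finitely many values $d_p(q_1),\dots,d_p(q_n)$, and since $p \in \overline Z$ this pattern is realized by some element of $Z$, hence by our chosen witness in $Z_0$. The role of $d_p \in B_r^{\S}$ is to control the closure so that convergence can be tested on the countably-many-at-a-time basic data rather than on all of $S_y(M)$ at once.

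The main obstacle I anticipate is that, unlike the classical second-countable setting of \cite{BFT}, the base has size $\kappa$ rather than being countable, so closure cannot be witnessed by a single sequence and the naive "enumerate a countable dense set of conditions" argument does not literally go through. This is exactly the substitution flagged in the introduction: one replaces sequences by the more complicated index set $\finiteset$ and the filter $\filter$. Concretely, the delicate point is ensuring that the single choice of $Z_0$ works \emph{simultaneously} for approximating $p$ through the directed family of finite conditions, which is where the $B_r^{\S}$ membership of $d_p$ (via its behavior on every closed subset $F$) must be invoked to prevent the approximation from escaping to a non-$\kappa$-generated closure. I would handle this by interleaving the basis enumeration with a fusion/diagonalization over $\finiteset$ indexed by finite subsets of the base, using the property $\boxtimes_\kappa$ to keep every stage realized by an actual element of $Z$, and then reading off $Z_0$ as the set of witnesses used. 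The verification that $\lim_\filter$ (or just the closure condition) holds for the resulting family is then a routine check against the subbasic neighborhoods of $p$.
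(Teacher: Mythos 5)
There is a genuine gap, and it sits exactly at the step you wave through as ``routine.'' Your plan is to pick one witness in $Z$ for each finite pattern of values realized on the base of $S_y(M)$, and you count ``at most $\kappa$ basic sets, hence at most $\kappa$ finite patterns.'' But basic neighborhoods of $d_p$ in the pointwise-convergence topology are indexed by finite tuples of \emph{points} of $S_y(M)$ together with the values of $d_p$ there, not by finite tuples of basic clopen subsets of $S_y(M)$. The space $S_y(M)$ has a base of size $\kappa$ but may have $2^\kappa$ points, so the honest version of your witness-selection produces $Z_0$ of size up to $2^\kappa$, which is vacuous. The substitute you implicitly need---that it suffices to match $d_p$ on a $\kappa$-sized dense set of points, or on basic clopen data---fails because $d_p$ and the $d_q$ are not continuous: agreement on a dense set of $S_y(M)$ does not propagate to arbitrary points for Baire-1-type functions. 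You do sense this (your third paragraph says $B_r^{\S}$ ``must be invoked to prevent the approximation from escaping''), but the proposal never actually uses $B_r^{\S}$ in any concrete step; the fusion/diagonalization over $\finiteset$ is named but not performed, and it is precisely the content of the lemma, not a verification.

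For comparison, the paper's proof (following Todor\v{c}evi\'c) does real work at this point. For $A\subseteq Z$ it forms the derived sets $A^{(n)}\subseteq S_y(M)^n$ of $n$-tuples on which no $q\in A$ matches $d_p$; the goal $p\in\overline{Z_0}$ is exactly $Z_0^{(n)}=\emptyset$ for all $n$. It builds an increasing $\kappa^+$-chain of subsets $A_\alpha\subseteq Z$ of size $\leq\kappa$ along which some closure $\overline{A_\alpha^{(n)}}$ strictly shrinks; since each $S_y(M)^n$ has a base of size $\kappa$, this stabilizes. If some $A^{(n)}$ remains non-empty with closure $K$, it defines for each $q$ the agreement function $\delta_q:K\to\{0,1\}$ and proves $\delta_q\in B^{\S}(K)$ using Theorem \ref{th_mainth} applied to \emph{both} $d_p$ and $d_q$ (the set where both are locally constant is dense open, so $\delta_q$ is locally constant on a dense open subset of $K$). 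Then, choosing $q_i\in Z$ matching $d_p$ on finite pieces of a dense $S\subseteq A^{(n)}$ and passing to an ultrafilter limit $q=\lim_{\mathcal D}q_i$, one gets $\delta_q\equiv 0$ on $S$ and $\delta_q\equiv 1$ on the dense set $B^{(n)}$, contradicting $\delta_q\in B^{\S}(K)$. Your embedding of $\invtypes$ into $\{0,1\}^{S_y(M)}$ in the first paragraph is correct and matches the paper's setup, but without the derived-set stabilization and the $\delta_q$ contradiction (or an equivalent fragmentation argument), the cardinality bound $|Z_0|\leq\kappa$ is not established.
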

\begin{proof}
For any $A\subseteq Z$, and $n<\omega$, define $A^{(n)}$ to be the set of tuples $\bar s\in S_y(M)^n$ for which there is no $q\in A$ such that $d_q$ agrees with $d_p$ on $\bar s$. We are looking for a subset $Z_0\subseteq Z$ of size $\leq \kappa$ such that $Z_0^{(n)}= \emptyset$ for all $n$.

Set $A_0=\emptyset$ and build by induction on $\alpha$ a sequence $A_0 \subseteq \cdots \subseteq A_\alpha \subseteq \cdots$ of subsets of $Z$ of size at most $\kappa$ such that for each $\alpha$, for some $n=n(\alpha)$, $\overline {A_\alpha^{(n)}} \supsetneq \overline {A_{{\alpha+1}}^{(n)}}$. This process must stop at some ordinal $\alpha<\kappa^+$ because each $S_y(M)^n$ has a base of open sets of size $\kappa$. We then have that for any $B\supseteq A_\alpha$ of size at most $\kappa$, for all $n<\omega$, $\overline {B^{(n)}} = \overline {A_\alpha^{(n)}}$.

Now set $A=A_\alpha$. If for all $n$, $A^{(n)}=\emptyset$, then we are done. Otherwise, fix some $n$ for which $A^{(n)}$ is not empty and let $K=\overline {A^{(n)}}$. Fix $S\subseteq A^{(n)}$ a dense subset of size $\leq \kappa$ and enumerate it as $S=(x_k:k<\kappa)$. For $q\in Z$, let $\delta_q: K\to \{0,1\}$ be defined by $\delta_q(\bar s)=0$ if $d_q$ agrees with $d_p$ on $\bar s$, and $\delta_q(\bar s)=1$ otherwise.

We claim that $\delta_q$ is in $B^{\S}(K)$: For $i,j\in \{0,1\}$ define $W_{i,j}\subseteq S_y(M)$ as the intersections of the interiors of $d_p^{-1}(i)$ and of $d_q^{-1}(j)$. As $d_p$ and $d_q$ are in $S^{\S}(S_y(M))$ by Theorem \ref{th_mainth}, $W=\bigcup_{i,j} W_{i,j}$ is a dense open set. Hence $W^n$ is a dense open set in $S_y(M)^n$. But $\delta_q$ is locally constant on $W^n$: $W^n$ is disjoint from $\overline{\delta_q^{-1}(0)}\cap \overline{\delta_q^{-1}(1)}$. We conclude that $\delta_q$ is in $B^{\S}(K)$ as claimed.

For $i\in \finiteset$, find some $q_i\in Z$ such that $\delta_{q_i}$ is equal to 0 on $\{x_k:k\in i\}$ (exists as $p\in \overline Z$). Then for all $x\in S$, $\lim_{\filter} \delta_{q_i}(x)=0$. Let $B=\{q_i\}_{i\in \finiteset} \cup A$. Take $\mathcal D$ an ultrafilter extending $\filter$ and set $q =\lim_{\mathcal D}q_i$. Then $\delta_q$ is equal to 0 on $S$ and to 1 on $B^{(n)}$. Both $S$ and $B^{(n)}$ are dense subsets of $K$. This contradicts the fact that $\delta_q \in B^{\S}(K)$.
\end{proof}

\begin{prop}\label{prop_frechetinv}
Let $Z\subseteq \invtypes$ be any subset and assume that $p\in \overline Z$. Then there is a family $(q_i:i\in \finiteset)$ of elements of $Z$ with $p=\lim_{\filter} q_i$.
\end{prop}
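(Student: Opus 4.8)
The plan is to obtain this proposition with essentially no new work, by combining the preceding size-reduction lemma with the Fréchet-type statement already proved for $\Delta$-types, namely the corollary to Proposition \ref{prop_frechetkappa}. The guiding observation is that $\invtypes$ sits inside $S_\phi(\monster)=S_{\{\phi\}}(\monster)$ as a subspace, so a convergence statement for $\phi$-types specializes immediately to one for invariant $\phi$-types.

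First I would apply the preceding lemma to $Z$ and $p\in\overline Z$. This yields a subset $Z_0\subseteq Z$ with $|Z_0|\le\kappa$ and still $p\in\overline{Z_0}$. This is where the genuine topological content lives: the lemma's rank argument relies on Theorem \ref{th_mainth}, i.e. on the fact that $d_p$ and each $d_q$ lie in $B^{\S}_r(S_y(M))$, in order to know that the sets $W=\bigcup_{i,j}W_{i,j}$ are dense open and thus to cap the size of the reducing set at $\kappa$. So all of the Borel-definability machinery is consumed in this single reduction step.

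Second I would invoke the corollary to Proposition \ref{prop_frechetkappa} with the degenerate parameter $\Delta=\{\phi\}$. Since $\phi$ is a single NIP formula, $\Delta$ is a set of NIP formulas of size $\le\kappa$; the set $Z_0$ is a family of $\phi$-types of size at most $\kappa$; and $p$ is a $\phi$-type lying in $\overline{Z_0}$. The corollary then produces a family $(q_i:i\in\finiteset)$ of elements of $Z_0\subseteq Z$ with $\lim_{\filter}q_i=p$, the limit computed in $S_\phi(\monster)$. As $Z_0\subseteq\invtypes$ and $p\in\invtypes$ and $\invtypes$ carries the subspace topology, every $\filter$-neighbourhood condition tested in $\invtypes$ is a condition already tested in $S_\phi(\monster)$; hence the same family witnesses $p=\lim_{\filter}q_i$ in $\invtypes$, as required.

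The points I would slow down on are the two routine compatibility checks just mentioned (that the corollary applies verbatim to $\Delta=\{\phi\}$, and that convergence transfers from the ambient space to the subspace), but there is no real obstacle left at this stage. It is worth noting why the detour through the corollary is the right move rather than a direct construction: one is tempted to enumerate a dense subset $\{s_\xi:\xi<\kappa\}$ of $S_y(M)$ and pick $q_i\in Z_0$ agreeing with $d_p$ at $\{s_\xi:\xi\in i\}$, but pointwise agreement on a dense set does \emph{not} force agreement at the nowhere-dense ``boundary'' points, so an ultralimit of such a family need not equal $p$, and the $B^{\S}_r$ property alone does not close this gap. The corollary circumvents this entirely, because its underlying proof of Proposition \ref{prop_frechetkappa} secures convergence at \emph{every} instance via the NIP/Morley-sequence argument of Fact \ref{fact_morleyloc} rather than through any density argument. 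Thus the two hard ingredients, the input $d_p\in B^{\S}_r$ for the reduction and the indiscernibility argument for the extraction, have already been isolated, and the present proposition is their formal juxtaposition.
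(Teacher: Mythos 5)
Your proof is correct, and it reaches the conclusion by a genuinely more modular route than the paper's, though built from the same two ingredients. Both proofs begin identically: invoke the preceding lemma to replace $Z$ by some $Z_0\subseteq Z$ of size at most $\kappa$ with $p\in\overline{Z_0}$ (and you are right that this is where Theorem \ref{th_mainth}, i.e.\ the $B^{\S}_r$ input, is consumed). For the extraction step, however, the paper does \emph{not} cite the corollary to Proposition \ref{prop_frechetkappa}; it re-runs that argument inside the fixed monster: it fixes an $|M|^+$-saturated model $N\supseteq M$, realizes only the restrictions $q_i|_N$ by points $a_i\in\monster$, takes an ultrafilter $\mathcal D$ with $\lim_{\mathcal D}q_i=p$, applies Proposition \ref{prop_frechetkappa} to $\tilde p=\lim_{\mathcal D}\tp_\phi(a_i/\monster)$, and then needs a final invariance argument --- since $\tp_\phi(a_{\eta(i)}/\monster)$ need not equal $q_{\eta(i)}$, the extraction only yields $q_{\eta(i)}|_N\to p|_N$ directly, and one concludes because $M$-invariant $\phi$-types are determined by (and, by saturation of $N$, converge according to) their restrictions to $N$. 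Your route instead realizes the full global types of $Z_0$ in a larger monster (inside the corollary's proof), so that restricting back to $\monster$ gives convergence of the $q_i$ themselves, and no invariance or saturation is needed after the reduction. Your two compatibility checks are indeed the only points to verify and both go through: $\Delta=\{\phi\}$ is a legitimate instance of the corollary, and $\invtypes$ is closed in $S_\phi(\monster)$ (invariance is the complement of a union of clopen conditions), so closures and $\filter$-limits computed in the subspace agree with the ambient ones. What each approach buys: yours is literally the formal juxtaposition of two results already proved, at the cost of passing to a bigger monster; the paper's stays inside the fixed monster, exhibits the family as a subfamily of an enumeration of $Z$, and makes the role of invariance explicit. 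Your side remark about why a naive construction (choosing $q_i$ to agree with $d_p$ on finite pieces of a dense set and taking an ultralimit) would not suffice is also accurate --- that is precisely the failure mode the lemma's proof rules out via the contradiction with $\delta_q\in B^{\S}(K)$.
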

\begin{proof}
By the previous lemma, we may assume that $Z=\{q_i:i<\kappa\}$ has size at most $\kappa$ (allowing repetitions in the $q_i$'s if $|Z|<\kappa$). Fix some model $N$ containing $M$ and $|M|^+$-saturated. For each $i<\kappa$, let $a_i \models q_i|N$. Let $\mathcal D$ be an ultrafilter on $\kappa$ such that $\lim_{\mathcal D} q_i = p$. Define $\tilde p= \lim_{\mathcal D} \tp_\phi(a_i/\monster)$. This type is finitely satisfiable in $\{a_i:i<\kappa\}$. By Proposition \ref{prop_frechetkappa}, there is a subfamily $(a_{\eta(i)}:i\in \finiteset)$ such that $\lim_{\filter} \tp_\phi(a_{\eta(i)}/\monster)=\tilde p$. Then $p|_N=\lim_{\filter} q_{\eta(i)}|_N$. All types involved are $M$-invariant, hence they are determined by their restriction to $N$ and we conclude that $p=\lim_{\filter} q_{\eta(i)}$.
\end{proof}

\subsection{The case $\kappa=\aleph_0$}\label{sec_countable}

Assume in this section that $\kappa=\aleph_0$ and hence $T$ is countable. The results above are slightly simpler to state in this case, because we can replace $\mathfrak F_{\aleph_0}$-convergent families by convergent sequences. In fact the two notions are essentially equivalent: given a $\mathfrak F_{\aleph_0}$-convergent family $(f_i:i\in \mathfrak P_{<\omega}(\aleph_0))$, the sequence $(f_n:n<\omega)$ is convergent where $n$ is identified here with $\{0,\ldots,n-1\}$. Conversely, if $(f_n:n<\omega)$ is any sequence, then it converges if and only if the family $(f_i:i\in \mathfrak P_{<\omega}(\aleph_0))$ is $\filter$-convergent, where $f_i=f_n$ for $n$ maximal such that $n\subseteq i$.

So Proposition \ref{prop_frechetkappa} becomes the following (which already appeared in \cite{InvTypes}).

\begin{lemme}\label{lem_frechetcount}
Let $A$ be countable and $\Delta=\{\phi_i(x;y_i)\}$ a countable set of NIP formulas. Let $q$ be a global $\Delta$-type finitely satisfiable in $A$. Then there is a convergent sequence $(b_i:i<\omega)$ of points in $A$ such that $\lim(\tp_{\Delta}(b_i/\monster))=q$.
\end{lemme}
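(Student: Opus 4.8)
The plan is to deduce the lemma directly from Proposition \ref{prop_frechetkappa} specialized to $\kappa = \aleph_0$, via the correspondence between $\mathfrak F_{\aleph_0}$-convergent families and ordinary convergent sequences recorded just above. No fresh model-theoretic input is required: the content is entirely in translating the index set $\mathfrak P_{<\omega}(\aleph_0)$ into $\omega$.

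First I would apply Proposition \ref{prop_frechetkappa} with $\kappa = \aleph_0$ to the given data ($A$ countable, $\Delta$ a countable set of NIP formulas, $q$ a global $\Delta$-type finitely satisfiable in $A$). This yields a family $(a_i : i \in \mathfrak P_{<\omega}(\aleph_0))$ of points of $A$ with $\lim_{\mathfrak F_{\aleph_0}} \tp_\Delta(a_i/\monster) = q$. Next I would extract the subfamily indexed by initial segments: identifying $n < \omega$ with $\{0, \ldots, n-1\} \in \mathfrak P_{<\omega}(\aleph_0)$, set $b_n = a_{\{0,\ldots,n-1\}}$. I claim $(\tp_\Delta(b_n/\monster) : n<\omega)$ converges to $q$. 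Indeed, a basic neighborhood $U$ of $q$ in $S_\Delta(\monster)$ is cut out by finitely many instances of $\Delta$-formulas, so $\{i : \tp_\Delta(a_i/\monster) \in U\} \in \mathfrak F_{\aleph_0}$ and hence contains some generator $T_j = \{i : i \supseteq j\}$. Restricting to initial segments, $n \supseteq j$ amounts to $n \geq \max(j)+1$, so $\tp_\Delta(b_n/\monster) \in U$ for all but finitely many $n$. Thus $(b_n : n<\omega)$ is the desired convergent sequence.

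I expect no genuine obstacle, precisely because the hard part---the NIP argument running through a Morley sequence and Fact \ref{fact_morleyloc}---is already absorbed into Proposition \ref{prop_frechetkappa}. The single point needing care is that the Fr\'echet (cofinite) filter on the initial segments refines the trace of $\mathfrak F_{\aleph_0}$, which is exactly the inclusion $T_j \supseteq \{n : n \geq \max(j)+1\}$ used above.

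Should one prefer a self-contained argument, the proof of Proposition \ref{prop_frechetkappa} can simply be rerun in the countable case: the Morley sequence then supplies only countably many formulas $(\phi_k(x;c_k) : k<\omega)$, one picks $b_n \in A$ satisfying $\bigwedge_{k<n} \phi_k(x;c_k)$, and for each fixed $k$ the cofinite set $\{n : n>k\}$ belongs to any ultrafilter extending the Fr\'echet filter, giving $\tilde q'|_{AI} = \tilde q|_{AI}$ and the same contradiction via Fact \ref{fact_morleyloc}. I would favor the first route as shorter and manifestly correct.
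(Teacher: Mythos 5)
Your proposal is correct and is exactly the paper's route: the paper gives no separate proof of Lemma \ref{lem_frechetcount}, deriving it from Proposition \ref{prop_frechetkappa} via the identification of $\mathfrak F_{\aleph_0}$-convergent families with convergent sequences (restricting to initial segments $n=\{0,\ldots,n-1\}$) stated just before the lemma. Your verification that any filter set contains a generator $T_j$ and that $n\supseteq j$ forces $n\geq \max(j)+1$ correctly fills in the only detail the paper leaves implicit.
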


\begin{cor}
Let $\Delta$ be as above. Then the space $S_\Delta(\monster)$ of $\Delta$-types over $\monster$ is sequentially compact.
\end{cor}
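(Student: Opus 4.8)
The plan is to reduce the statement to Lemma~\ref{lem_frechetcount} by a standard diagonalization, exploiting the fact that $S_\Delta(\monster)$ has a countable base for the topology. Since $\Delta$ is countable and the language may be taken countable, the space $S_\Delta(\monster)$ is a closed subspace of the compact space $\{0,1\}^{\text{(formulas)}}$, and the topology is generated by the countably many clopen sets $[\psi]$ as $\psi$ ranges over $\Delta$-formulas with parameters drawn from a countable dense set of realizations; more carefully, it suffices to separate points using instances $\phi_i(x;c)$, so the topology is second countable. Recall that a compact Hausdorff second-countable space is metrizable, and for metrizable spaces compactness coincides with sequential compactness. Thus one route is simply to observe that $S_\Delta(\monster)$ is second countable and hence sequentially compact, but this would bypass the NIP hypothesis entirely and in fact $S_\Delta(\monster)$ need \emph{not} be second countable (the parameter space $\monster$ is huge). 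So the honest argument must go through Lemma~\ref{lem_frechetcount}.

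First I would take an arbitrary sequence $(p_n : n<\omega)$ in $S_\Delta(\monster)$ and realize each $p_n$ by an element $a_n$ in a larger monster model $\monster_1$, exactly as in the Corollary following Proposition~\ref{prop_frechetkappa}. Let $A=\{a_n : n<\omega\}$, a countable set. Choose any ultrafilter $\mathcal D$ on $\omega$ and let $\tilde q=\lim_{\mathcal D}\tp_\Delta(a_n/\monster_1)$; this is a global $\Delta$-type finitely satisfiable in $A$, and its restriction $q$ to $\monster$ is likewise finitely satisfiable in $A$. Applying Lemma~\ref{lem_frechetcount} to $A$ and $q$ (over $\monster$), I obtain a convergent sequence $(b_k : k<\omega)$ of points of $A$ with $\lim_k \tp_\Delta(b_k/\monster)=q$.

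The key point is then to promote this to a convergent \emph{subsequence} of the original $(p_n)$. Each $b_k$ equals some $a_{\sigma(k)}$, and $\tp_\Delta(a_{\sigma(k)}/\monster)=p_{\sigma(k)}$, so the sequence $(p_{\sigma(k)} : k<\omega)$ converges to $q$. The one thing to verify is that the map $k\mapsto \sigma(k)$ can be taken injective, i.e.\ that we genuinely extract a subsequence rather than merely a convergent sequence drawn from the same countable set with repetitions. If infinitely many distinct indices appear among the $b_k$ we pass to those; if only finitely many distinct $a_n$ occur in the convergent sequence, then convergence forces the sequence to be eventually constant, equal to some $p_{n_0}$, and then the constant subsequence $(p_{n_0},p_{n_0},\ldots)$ of the original sequence converges. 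Either way we extract a convergent subsequence of $(p_n)$.

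The main obstacle I anticipate is precisely this last bookkeeping step: Lemma~\ref{lem_frechetcount} produces a sequence from the \emph{set} $A$ with possible repetitions, whereas sequential compactness requires a subsequence of the given sequence $(p_n)$. The resolution is the elementary case split above, but one must be slightly careful that "convergent with finitely many values" implies "eventually constant" in the (non-metrizable) space $S_\Delta(\monster)$. This holds because the space is Hausdorff: a sequence taking only finitely many values and converging to $q$ must be eventually equal to $q$, since the finitely many other values can be separated from $q$ by disjoint open sets, each excluding all but finitely many terms. With that observation in place the argument is complete.
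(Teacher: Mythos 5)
Your route is the paper's: realize each $p_n$ by a point $a_n$ in a larger monster $\monster_1$, take a limit type $q$ finitely satisfiable in $A=\{a_n:n<\omega\}$, invoke Lemma~\ref{lem_frechetcount} to get a convergent sequence $(b_k)$ of points of $A$ with $\tp_\Delta(b_k/\monster)\to q$, and then convert this into a subsequence of $(p_n)$. You also correctly isolated the one delicate point, which the paper passes over in silence: the lemma returns a sequence drawn from the \emph{set} $A$, with repetitions allowed, not a subsequence. But your resolution of the degenerate case is wrong. When only finitely many indices occur among the $b_k$, you conclude the sequence is eventually constant with $q=p_{n_0}$ and declare that ``the constant subsequence $(p_{n_0},p_{n_0},\ldots)$ of the original sequence converges.'' A constant sequence is a subsequence of $(p_n)$ only if the value $p_{n_0}$ occurs at infinitely many indices $n$, since subsequence indices must be strictly increasing; if $(p_n)$ is injective, no constant subsequence exists and your argument produces nothing. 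The failure is not hypothetical: you allow ``any ultrafilter $\mathcal D$,'' and a principal one makes $q=p_{n_0}$ a realized type, for which the lemma may legitimately return the constant sequence $b_k=a_{n_0}$; even for non-principal $\mathcal D$ the limit $q$ can coincide with one of the $p_n$.

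The repair requires the two reductions the paper builds in. First, as in the paper's opening line, assume the $p_n$ pairwise distinct: if some value recurs infinitely often, that recurring value already yields a convergent subsequence; otherwise pass to an injective subsequence. Second, take $q$ to be an accumulation point of the (now infinite) set $Z$ --- equivalently, insist that $\mathcal D$ be non-principal. Then every formula of $q$ is satisfied by infinitely many of the $a_n$, and the finite-range case can be eliminated: if the lemma's output were eventually constant, then $q=\tp_\Delta(a_{n_0}/\monster_1)$ for a single $n_0$ (by pairwise distinctness), but $q$ remains finitely satisfiable in $A\setminus\{a_{n_0}\}$, and re-applying the lemma to this smaller set forces infinite range, since an eventually constant sequence from $A\setminus\{a_{n_0}\}$ would converge to a type distinct from $q$ in a Hausdorff space. (Alternatively, one can note that in the proof of Proposition~\ref{prop_frechetkappa} the witnesses $b_i$ may be chosen injectively, as each finite fragment of $q$ is satisfied by infinitely many elements of $A$.) To be fair, the paper's own proof is terse here --- it calls the lemma's output a ``subsequence'' without comment --- but its hypotheses of pairwise distinctness and $q$ an accumulation point are precisely what make that step legitimate; your version omits both, and the elementary case split you substitute does not close the gap.
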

\begin{proof}
Let $(p_i:i<\omega)$ be a sequence of $\Delta$-types over $\monster$, which we may assume to be pairwise distinct. In a bigger monster model $\monster_1$, realize each $p_i$ by a point $a_i$ and let $A= \{a_i:i<\omega\}$. Let $Z = \{\tp_\Delta(a_i/\monster_1):i<\omega\}$ and let $q$ be an accumulation point of $Z$. Then by the previous lemma, $q$ is the limit of a subsequence $(\tp_\Delta(a_{\eta(i)}/\monster_1):i<\omega)$. In particular, the subsequence $(p_{\eta(i)}:i<\omega)$ converges in $S_\Delta(\monster)$.
\end{proof}

When $\kappa=\aleph_0$, Theorem \ref{th_baireone} boils down to the usual characterization of Baire class 1 functions as recalled in Fact \ref{fait_baire}. Note that $S_y(M)$ is now a Polish space. Using the notations of the previous section, we deduce from Theorem \ref{th_mainth} that if $M$ is countable, $\phi(x;y)$ is NIP and $p\in Inv_\phi(M)$, then the the function $d_p: S_y(M) \to \{0,1\}$ is of Baire class 1 in the usual sense.

Finally Proposition \ref{prop_frechetinv} becomes the following statement.

\begin{prop}\label{prop_frechetcount}
Let $T$ and $M$ be countable, $\phi(x;y)$ NIP and $Z\subseteq \invtypes$ be any subset. Let $p\in \overline Z$. Then there is a sequence $(q_n:n<\omega)$ of elements of $Z$ converging to $p$.
\end{prop}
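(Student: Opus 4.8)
The plan is to deduce this directly from Proposition \ref{prop_frechetinv} by specializing to the case $\kappa = \aleph_0$, using the dictionary between $\filter$-convergent families and convergent sequences described at the beginning of this subsection. First I would invoke Proposition \ref{prop_frechetinv} with $\kappa = \aleph_0$: since $T$ and $M$ are countable, the hypotheses are met, so there is a family $(q_i : i \in \mathfrak P_{<\omega}(\aleph_0))$ of elements of $Z$ with $p = \lim_{\mathfrak F_{\aleph_0}} q_i$. The task then reduces to converting this $\filter$-convergent family into an honest convergent sequence.

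For the conversion, I would use the translation recalled at the start of the section: given the $\mathfrak F_{\aleph_0}$-convergent family $(q_i)$, set $q_n := q_{\{0,\dots,n-1\}}$, i.e.\ index by initial segments of $\omega$. The point is that each initial segment $\{0,\dots,n-1\}$ is a member of $\mathfrak P_{<\omega}(\aleph_0)$, so $q_n$ is a well-defined element of $Z$. I would then check that $(q_n : n < \omega)$ converges to $p$ in $\invtypes$. This follows because the sets $T_j = \{i : i \supseteq j\}$ generating $\filter$ are cofinal under the initial-segment enumeration: for any basic neighborhood $U$ of $p$, $\filter$-convergence gives some $j$ with $q_i \in U$ for all $i \supseteq j$, and then for all $n$ large enough that $\{0,\dots,n-1\} \supseteq j$ we have $q_n \in U$. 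Since $\invtypes \subseteq S_\phi(\monster)$ carries the topology of pointwise convergence and the relevant neighborhoods are determined by finitely many instances $\phi(x;b)$, this suffices to conclude sequential convergence.

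I do not anticipate a genuine obstacle here, as the statement is essentially a bookkeeping corollary of the more general Proposition \ref{prop_frechetinv}; the only point requiring a little care is verifying that the initial-segment reindexing is compatible with the generating sets $T_j$ of $\filter$, which is immediate once one observes that every finite $j \subseteq \omega$ is contained in all sufficiently long initial segments. One could alternatively give a self-contained argument by rerunning the proof of Proposition \ref{prop_frechetinv} and appealing directly to Lemma \ref{lem_frechetcount} (the $\kappa = \aleph_0$ form of Proposition \ref{prop_frechetkappa}) in place of Proposition \ref{prop_frechetkappa}, producing the convergent sequence at the final step rather than a $\filter$-convergent family; but the reindexing route is shorter and avoids duplicating the argument. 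Either way, the substance of the result lies entirely in Proposition \ref{prop_frechetinv}, and this statement is its clean restatement in the countable setting.
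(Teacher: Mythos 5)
Your proposal is correct and matches the paper exactly: the paper gives no separate proof, simply noting that Proposition \ref{prop_frechetinv} ``becomes'' this statement when $\kappa=\aleph_0$, via the same initial-segment dictionary between $\mathfrak F_{\aleph_0}$-convergent families and convergent sequences that you spell out. Your verification that the generating sets $T_j$ are respected by the reindexing $q_n := q_{\{0,\dots,n-1\}}$ is precisely the (routine) content the paper leaves implicit.
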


In fact, since $S(M)$ is a Polish space, we have the more precise result.

\begin{prop}\label{prop_isrosenthal}
Let $T$ and $M$ be countable, $\phi(x;y)$ NIP, then the set $\invtypes$ is a Rosenthal compactum.
\end{prop}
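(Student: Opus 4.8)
The plan is to exhibit the assignment $p\mapsto d_p$ as a topological embedding of $\invtypes$ into the space $B_1(S_y(M))$ of Baire class 1 functions on the Polish space $S_y(M)$. Since $T$ and $M$ are countable, $S_y(M)$ is compact, totally disconnected and second countable, hence metrizable, hence Polish; so producing an embedding of this kind is exactly what the definition of a Rosenthal compactum demands. Almost all of the real content is already supplied by Theorem \ref{th_mainth}, and the remaining work is the assembly of that result with the relevant equivalence in Theorem \ref{th_baireone} together with two routine topological verifications.

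First I would check that $\invtypes$ is compact Hausdorff. It is a subspace of $S_\phi(\monster)$, which is compact Hausdorff, so it suffices to see that it is closed. For each pair of tuples $b,b'$ with $\tp(b/M)=\tp(b'/M)$, the set of $\phi$-types that decide $\phi(x;b)$ and $\phi(x;b')$ in the same way is clopen in $S_\phi(\monster)$; being $M$-invariant is the intersection of these clopen conditions over all such pairs, hence closed. Thus $\invtypes$ is closed in $S_\phi(\monster)$ and therefore compact.

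Next I would verify that $\Phi:p\mapsto d_p$ is a continuous injection into $\mathbb R^{S_y(M)}$ with the product (pointwise) topology. Injectivity is immediate: an $M$-invariant $\phi$-type is determined by the instances $\phi(x;b)$ it contains, and by invariance membership of $\phi(x;b)$ depends only on $\tp(b/M)$, which is precisely the data recorded by $d_p$. For continuity into the product it is enough that each coordinate map $p\mapsto d_p(q)$ is continuous; fixing $q\in S_y(M)$ and a realization $b\models q$ in $\monster$, this map is the indicator function of the clopen set $\{p:\phi(x;b)\in p\}$, so it is continuous. As $\invtypes$ is compact and the target Hausdorff, $\Phi$ is a homeomorphism onto its image.

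Finally I would land the image inside $B_1(S_y(M))$. By Theorem \ref{th_mainth}, $d_p\in B_r^{\S}(S_y(M))$ for every $p\in\invtypes$; and since $\kappa=\aleph_0$, Theorem \ref{th_baireone} (where all three conditions are equivalent) identifies $B_r^{\S}(S_y(M))$ with $B_1(S_y(M))$, which in this second countable setting is exactly the classical class of Baire class 1 functions characterized in Fact \ref{fait_baire}. Hence $\Phi$ embeds $\invtypes$ into $B_1(S_y(M))$ for a Polish $S_y(M)$, which is the definition of a Rosenthal compactum. The only genuine obstacle, beyond invoking Theorem \ref{th_mainth}, is the translation between the paper's notion $B_r^{\S}$ and the classical Baire class 1 functions; this is precisely the extra content that the equivalence in Theorem \ref{th_baireone} provides in the countable case, and it is what forces the hypothesis $\kappa=\aleph_0$ here.
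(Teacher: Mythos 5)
Your proposal is correct and follows essentially the same route as the paper, whose one-sentence proof identifies $\invtypes$ with a closed subspace of $\{0,1\}$-valued functions on $S_y(M)$ and invokes Theorem \ref{th_mainth} (via the countable case of Theorem \ref{th_baireone}) to conclude these are Baire class 1. You have merely spelled out the routine verifications the paper leaves implicit---closedness of $\invtypes$, injectivity and continuity of $p\mapsto d_p$, and the compact-to-Hausdorff embedding argument---all of which are accurate.
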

\begin{proof}
The set $\invtypes$ can be identified with a closed subspace of functions from $S_y(M)$ to $\{0,1\}$ and by Theorem \ref{th_mainth}, those functions are all of Baire class 1.
\end{proof}

We conclude with a theorem of Bourgain, which he stated for Rosenthal compacta in \cite{BourGdelta}. The proof we give is his, adapted to our context.

By a $G_\delta$ point $x$ of a space $F$, we mean a point $x$ such that the singleton $\{x\}$ can be written as an intersection of at most countably many open subsets of $F$.

\begin{prop}
Assume that $T$ and $M$ are countable and $\phi(x;y)$ is NIP. Then any closed subset $F\subseteq \invtypes$ contains a dense set of $G_\delta$ points.
\end{prop}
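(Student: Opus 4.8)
The plan is to recognize this as Bourgain's theorem that a Rosenthal compactum carries a dense set of $G_\delta$ points, applied to $F$, and to reproduce his argument in the present language. Since $F$ is closed in $\invtypes$, the map $p\mapsto d_p$ identifies $F$ with a closed subspace of $\{0,1\}^{S_y(M)}$ all of whose coordinate functions are of Baire class $1$ (Theorem~\ref{th_mainth} together with $\kappa=\aleph_0$, exactly as in Proposition~\ref{prop_isrosenthal}); in particular $F$ is a zero-dimensional compactum. First I would record the standard reformulation of the conclusion in this concrete picture: a point $p\in F$ is a $G_\delta$ point of $F$ if and only if there is a \emph{countable} set $D\subseteq S_y(M)$ such that $p$ is the only $q\in F$ with $d_q{\restriction}D=d_p{\restriction}D$. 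The ``if'' direction is immediate, since then $\{p\}=\bigcap_{t\in D}\{q\in F:d_q(t)=d_p(t)\}$; the ``only if'' direction follows by writing $\{p\}$ as a countable intersection of open sets, refining each to a basic clopen neighbourhood of $p$, and letting $D$ be the (countable) union of the finitely many coordinates used. Call such a $D$ a \emph{determining set} for $p$.

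Next I would reduce the density statement to a single existence statement. Because $F$ is zero-dimensional, every nonempty relatively open subset of $F$ contains a nonempty relatively clopen $L$, and any $G_\delta$ point of $L$ is a $G_\delta$ point of $F$, since $L$ is open in $F$ and hence open subsets of $L$ are open in $F$. So it suffices to show that every such $L$ contains a point with a countable determining set. Fixing $L$, I would build the point by a fusion: enumerate a countable clopen base $\{B_k:k<\omega\}$ of $S_y(M)$ and construct increasing finite sets $D_0\subseteq D_1\subseteq\cdots\subseteq S_y(M)$ together with a compatible ``thread'' of nonempty clopen fibres $L\supseteq L_0\supseteq L_1\supseteq\cdots$, where $L_n=\{q\in L:d_q{\restriction}D_n=\sigma_n\}$ for some $\sigma_n\in\{0,1\}^{D_n}$. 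The target is that $\bigcap_n L_n$ be a singleton $\{p\}$, for then $D=\bigcup_n D_n$ is a countable determining set for $p\in L$. At stage $k$ I would use the defining property of $B^{\S}_r(S_y(M))$ from Theorem~\ref{th_mainth}: on the clopen set $B_k$ the boundary $\overline{d_q^{-1}(0)}\cap\overline{d_q^{-1}(1)}$ of each coordinate has empty interior, so whenever the current fibre still splits inside $B_k$ I can add finitely many coordinates from $B_k$ to $D_n$ and pass to a nonempty subfibre on which the splitting set has strictly smaller closure inside $B_k$; diagonalising over $k$ ensures every basic region is eventually treated. That the thread converges to an actual $p\in L$ I would read off from compactness of the decreasing intersection, together with the angelicity of $F$ (Definition~\ref{defi_angelic}, via Proposition~\ref{prop_isrosenthal}).

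The main obstacle is precisely the bookkeeping that keeps $D$ countable while genuinely singling out $p$: one must guarantee that \emph{no} splitting coordinate of $\bigcap_n L_n$ survives, i.e.\ that the limit fibre cannot split at any $t\in S_y(M)$. The oscillation rank of each individual coordinate function is countable---this is exactly the content of Baire-class-$1$-ness and of $B^{\S}_r$ recalled in Fact~\ref{fait_baire} and Theorem~\ref{th_baireone}---but the object that must be forced to stabilise is the \emph{family} splitting set $\{t:L_n\text{ splits at }t\}$, which is not the boundary of any single function. Bridging this gap is where the compactness of $F$ in the pointwise topology and the Bourgain--Fremlin--Talagrand tameness are needed: interleaving the enumeration of the clopen base of $S_y(M)$ with the extraction of the thread so that the recursion closes after $\omega$ steps, and then verifying that the resulting countable $D$ separates $p$ from every other member of $F$, is the delicate heart of the argument. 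Once that is in place, applying the existence statement inside every nonempty clopen subset of $F$ yields the desired dense set of $G_\delta$ points.
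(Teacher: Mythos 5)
There is a genuine gap here, and you in effect acknowledge it yourself: the paragraph beginning ``The main obstacle\dots'' is an honest placeholder standing exactly where the heart of the proof should be. Your framing is fine --- identifying $F$ with a pointwise-compact set of $\{0,1\}$-valued functions on $S_y(M)$ whose members lie in $B^{\S}_r(S_y(M))$ by Theorem~\ref{th_mainth}, the reformulation of ``$G_\delta$ point'' via a countable determining set $D$, and the localization to a nonempty clopen $L$ are all correct --- but these are the easy reductions. The construction you then sketch, a fusion of length $\omega$ in which at stage $k$ you add \emph{finitely} many coordinates from the $k$-th basic clopen set $B_k$ and pass to a subfibre on which ``the splitting set has strictly smaller closure,'' does not close after $\omega$ steps and nothing in your recursion forces it to: the property $B^{\S}$ controls the boundary $\overline{d_q^{-1}(0)}\cap\overline{d_q^{-1}(1)}$ of one function at a time, whereas what must be killed is the splitting set of the whole fibre, and the decrease you describe can continue transfinitely. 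After $\omega$ stages the limit fibre $\bigcap_n L_n$ may still split at uncountably many coordinates, so the countable $D=\bigcup_n D_n$ need not determine a point. (Also, angelicity is irrelevant for producing a point of $\bigcap_n L_n$ --- compactness of a decreasing chain of nonempty closed sets suffices; the issue is uniqueness, which is precisely what is missing.)

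The paper (following Bourgain) resolves this by a mechanism your sketch lacks: a \emph{transfinite} induction of length $<\aleph_1$, pinning the fibre down on \emph{open} subsets of $S_y(M)$ rather than at finitely many points. One builds nonempty closed sets $F_\alpha=W_\alpha\cap\bigcap_{\beta<\alpha}F_\beta$ with $W_\alpha$ clopen in $\invtypes$, together with an increasing chain of open sets $U_\alpha\subseteq S_y(M)$ on which all $d_p$, $p\in F_\alpha$, agree. Termination at some $\alpha_*<\aleph_1$ is automatic because $S_y(M)$ is second countable, so there is no strictly increasing $\aleph_1$-chain of open sets; the ability to continue at a non-terminal stage is supplied by a Baire category argument that does not appear in your outline: setting $K=S_y(M)\setminus U_*$, each $p\in F_*$ has $d_p|_K\in B^{\S}(K)$ and hence constant on some nonempty clopen $V\subseteq K$, so $F_*$ is covered by the countably many closed sets $C_\epsilon(V)\cap F_*$ ($V$ clopen in $K$, $\epsilon\in\{0,1\}$), one of which has nonempty interior in $F_*$; this yields the next $U_{\alpha_*}=U_*\cup V$ and $W_{\alpha_*}$. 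At the end $U_*=S_y(M)$, all $d_p$ for $p\in F_*$ agree everywhere, so $F_*=F\cap\bigcap_{\alpha<\alpha_*}W_\alpha$ is a singleton and visibly a $G_\delta$ point; density follows by running this inside each nonempty clopen subset of $F$, as you correctly anticipated. To repair your proposal you would have to replace the $\omega$-fusion by this transfinite scheme (or prove separately that $\omega$ steps suffice, which is false in general for this kind of argument).
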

\begin{proof}
For $X\subseteq S_y(M)$ and $\epsilon \in\{0,1\}$, let $C_\epsilon(X)$ be the set of types $p\in \invtypes$ such that $d_p$ restricted to $X$ is constant equal to $\epsilon$. It is a closed subset of $\invtypes$.

Fix a closed subset $F\subseteq \invtypes$, set $F_0 = F$, $U_0=\emptyset$ and we try to build by induction on $\alpha<\aleph_1$:

$\cdot$ a sequence of non-empty closed sets $F_\alpha\subseteq \invtypes$ such that $F_{\alpha}=W_\alpha\cap  \bigcap_{\beta<\alpha} F_\beta$, where $W_\alpha \subseteq \invtypes$ is clopen (in particular $F_\alpha \subseteq F_\beta$ for $\alpha\geq \beta$);

$\cdot$ an increasing sequence of open sets $U_\alpha \subseteq S_y(M)$ such that all the functions $d_p$ for $p\in F_\alpha$ agree on $U_\alpha$.\\
Since there is no increasing sequence of open subsets of $S_y(M)$ of length $\aleph_1$, this construction must stop at some $\alpha_*<\aleph_1$. Let then $F_* = \bigcap_{\alpha < \alpha_*} F_{\alpha}= F \cap \bigcap_{\alpha<\alpha_*} W_\alpha$ and $U_* = \bigcup_{\alpha<\alpha_*} U_\alpha$. Note that $F_*$ is a closed non-empty $G_\delta$ subset of $F$ and all the functions $d_p$, $p\in F_*$, agree on $U_*$. We show that $U_* = S_y(M)$, and thus $F_*$ must be a singleton which gives what we want.

Assume not and let $K= S_y(M)\setminus U_*$. Then for $p\in F_*$, as $d_p\in B^{\S}(K)$, there is some non-empty clopen set $V\subseteq K$ such that $d_p$ is constant on $V$. Thus $p \in C_\epsilon(V)$ for some $\epsilon$. As there are countably many clopen sets in $K$, by the Baire property, there is a non-empty clopen $V\subseteq K$ such that $C_\epsilon(V) \cap F_*$ has non-empty interior in $F_*$. Fix such a $V$. Then we can set $U_{\alpha_*} = U_*\cup V$, and find some clopen $W_{\alpha_*}\subseteq \invtypes$ such that $F_{\alpha_*}:= F_* \cap W_{\alpha_*}$ is non-empty and included in $C_\epsilon(V)$. This contradicts maximality of the construction.
\end{proof}


\bibliography{BFT}

\end{document}